\newtheorem{theorem}{Theorem}
\newtheorem{lemma}[theorem]{Lemma}
\newtheorem{proposition}[theorem]{Proposition}
\theoremstyle{definition}                 
\newtheorem{example}{Example}            
\newtheorem{remark}{Remark} \newtheorem*{notation}{Notation}
\newcommand{\field}[1]{\mathbb{#1}}          \newcommand{\Q}{\field{Q}}
\newcommand{\R}{\field{R}}                   \newcommand{\Z}{\field{Z}}
                    \newcommand{\e}{\varepsilon}
\newcommand{\fg}{\mathfrak     g}     \newcommand{\fp}{\mathfrak    p}
 \newcommand{\fh}{\mathfrak h}
\newcommand{\fu}{\mathfrak u}
 \newcommand{\ra}{\rightarrow}
\begin{document}

\title[Hypergeometric Groups of Orthogonal Type]
{Hypergeometric Groups of Orthogonal Type}

\author{T.N.Venkataramana}

{\address{ T.N.Venkataramana, School of Mathematics, TIFR, Homi Bhabha
Road, Colaba, Mumbai 400005, India}

\email{venky@math.tifr.res.in}

\subjclass{primary:  22E40.  Secondary:  20F36  \\  T.N.Venkataramana,
School of  Mathematics, Tata  Institute of Fundamental  Research, Homi
Bhabha Road, Colaba, Mumbai 40005, INDIA}

\date{}

\begin{abstract} We obtain an infinite family of orthogonal hypergeometric 
groups which are higher rank arithmetic groups. We also list cases of 
arithmetic monodromy when the real Zariski closure of the 
hypergeometric group is $O(2,3)$. 
\end{abstract}

\maketitle{}

\section{Introduction}\label{introsection}

Consider the $_nF_{n-1}$ type hypergeometric differential equation 
\[D(\alpha,\beta,q)u=0\]  where $q$ varies  over the  thrice punctured
sphere $C={\mathbb P}^1\setminus  \{0,1,\infty\}$, $\alpha , \beta \in
\Q ^n$, 
\[D=D(\alpha,\beta,q)=  \prod  _{i=1}^n  (\theta  +\beta  _i-1)-q\prod
_{i=1}^n (\theta +\alpha _i),\] and $\theta =q\frac{d}{dq}$. Thus, $D$
and  $\theta $  are  viewed  as differential  operators  on the  curve
$C$. The  fundamental group of $C$  (namely the free group  on the two
generators which can be taken to be small loops around $0$ and $\infty
$)  acts on the  space of  solutions of  this equation  (the monodromy
representation).  The action is  by analytic continuation of solutions
along the  loops corresponding to  elements of the  fundamental group.
The image of the resulting representation is called the hypergeometric
group corresponding  to the  parameters $\alpha ,  \beta$.  It  is the
group generated  by $h_0,h_{\infty}$ where $h_0$  and $h_{\infty}$ are
the images  of the loops around  $0$ and $\infty$  under the monodromy
representation. \\

A  Theorem  of  Levelt  completely  describes  the  ($_nF_{n-1}$-type)
hypergeometric  monodromy  representation.    We  breifly  recall  the
description.  Assume that $\alpha _j-  \beta _k$ is not an integer for
any $j,k$.  Equivalently, if  we put $f(x)=\prod _{j=1}^n (x-e^{2\pi i
\alpha _i})$  and $g(x)=\prod _{j=1}^n (x-e^{2\pi i  \beta _j})$, then
$f,g$ have no common roots.  Denote by $A,B$ the companion matrices of
$f,g$ respectively.   Then Levelt's theorem  says that there  exists a
basis $\{u\}$  of solutions  of the foregoing  hypergeometric equation
$Du=0$ with respect to which, the matrix of the action of $h_0$ is $A$
and that  of $h_{\infty}$  is $B^{-1}$. Thus  a small loop  around $1$
goes  to  the matrix  $C=  A^{-1}B$  since $h_0h_1h_{\infty}=1$.   The
element  $C$  is  a  complex  reflection.   Moreover,  given  any  two
co-prime, monic,  degree $n$ polynomials $f,g$,  the representation of
the  free group  $F_2=<x_0,x_{\infty}>$  on two  generators, given  by
$x_0\mapsto  A$,  $x_{\infty}\mapsto  B^{-1}$  is  the  hypergeometric
monodromy representation corresponding  to parameters $\alpha , \beta$
with the  roots of $f,g$ being  the exponentials of  $\alpha _j, \beta
_k$ as before.  \\

Beukers  and  Heckman (\cite{Beu-Hec})  have  completely analysed  the
Zariski  closure  $G$ of  the  foregoing  monodromy.   We now  briefly
describe their result, making  the (simplifying) assumption that $f,g$
are products  of cyclotomic  polynomials; their coefficients  are then
integers.  Thus the hypergeometric  group is a subgroup of $GL_n(\Z)$.
Assume  also that $f,g$  form a  primitive pair  (\cite{Beu-Hec}).  In
\cite{Beu-Hec} it  is proved (see also \cite{FMS}, p.6 (2)) that If 
$f(0)/g(0)=-1$,  then the Zariski
closure $G$ is (even as an algebraic group over $\Q$) either finite or
the orthogonal  group $O_n(h)$ of a  non-degenerate rational quadratic
form $h$  (if $f(0)/g(0)=1$, and the other conditions are the same, 
then $n$  is even and  $G$ is the  symplectic group).
Thus  the hypergeometric  group is  either a  subgroup of  an integral
orthogonal group or a subgroup of the integral symplectic group. \\

In  \cite{Sar}, Sarnak  has  asked when  the  hypergeometric group  is
arithmetic  (i.e. has  finite  index  in the  integral  points of  its
Zariski  closure). Otherwise,  the  group  is said  to  be {\it  thin}
(\cite{Sar}).   In  \cite{FMS},  the   authors  prove  that  when  the
resulting  quadratic  form  has  signature  $(n-1,1)$,  (with  very  few
exceptions; see  Conjecture 2 of \cite{FMS})  the hypergeometric group
is often {\it thin} i.e. has infinite index in the integral orthogonal
group.  However,  when the  quadratic form has  higher rank  i.e.  has
signature  $(p,q)$ with  $p,q \geq  2$ or  when the  group $G$  is the
symplectic group, the  situation is less clear. In  \cite{SV}, the case
when  $G$ is  the  symplectic group  is  considered. It  is proved  in
\cite{SV} that in  a sizeable number of cases,  the monodromy group is
an  arithmetic group; however,  7 thin  examples (with  $G=Sp_4$) have
been given  by Brav and  Thomas \cite{BT} using a  ping-pong argument.
We understand that  the methods of \cite{BT} can  also prove thin-ness
in  some  higher  rank  orthogonal   cases.   An  example  of  a  thin
hypergeometric group in $O(2,2)$ is given in \cite{F}. \\

In the  present paper  we show that  for infinitely many  odd integers
$n$, and  for suitable  parameters $\alpha, \beta$  the hypergeometric
group is arithmetic, i.e. has  finite index in the integral orthogonal
group.   We  also give  many  examples  of  arithmetic monodromy  when
$G=O(2,3)$ over  $\R$ but has $\Q$  rank either one or  two.  There is
some interest in constructing these examples because these are perhaps
the first examples of  higher rank arithmetic hypergeometric monodromy
groups which are of {\it orthogonal} type. We prove 

\begin{theorem} \label{maintheorem}  Let $m\geq 0$ be  an integer. Let
$f_0(x)=x^5-1$ and  $g_0(x)=(x+1)(x^2+1)^2$.  Suppose $P,Q  \in \Z[x]$
are   co-prime  monic  polynomials   of  degree   $m$  such   that  if
$f(x)=f_0(x)P(x^6)$  and $g(x)=g_0(x)Q(x^6)$  then $f,g$  are co-prime
polynomials.  Then  the hypergeometric monodromy  group $\Gamma (f,g)$
is an  arithmetic subgroup of an integral  orthogonal group $O(h)(\Z)$
with $\Q-rank (h)\geq 2$.
\end{theorem}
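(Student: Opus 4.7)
The plan is to apply the arithmeticity criterion of Raghunathan--Venkataramana: a Zariski dense subgroup of a $\Q$-simple algebraic group $G$ of $\Q$-rank at least two which contains a subgroup of finite index in $U_P(\Z)$, for some $\Q$-parabolic subgroup $P$ of $G$ with unipotent radical $U_P$, is arithmetic. Beukers--Heckman immediately gives Zariski density of $\Gamma(f,g)$ in $O(h)$, so the remaining two tasks are (a) to identify $h$ and verify that its $\Q$-rank is at least $2$, and (b) to produce unipotent elements in $\Gamma(f,g)$ that generate a finite-index subgroup of the integer points of the unipotent radical of some $\Q$-parabolic.

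\emph{Identifying $h$ and its $\Q$-rank.} Using the Beukers--Heckman interlacing recipe, I would compute the signature of $h$ directly from the locations on the unit circle of the roots of $f$ and $g$. The roots of $f$ are the fifth roots of unity (coming from $f_0=\Phi_1\Phi_5$) together with the $\zeta_6$-orbits of the sixth roots of the roots of $P$, and similarly for $g$ with $g_0=\Phi_2\Phi_4^2$. Since $P(x^6), Q(x^6)$ contribute roots in $6$-element orbits that come in conjugate pairs on the unit circle (after the palindromy forced by the orthogonal hypothesis), a direct count using Beukers--Heckman shows that the signature of $h$ is such that the $\Q$-rank of $O(h)$ is at least $2$ for every admissible pair $(P,Q)$ in the family.

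\emph{Producing the unipotent.} In the orthogonal case the complex reflection $C=A^{-1}B$ is an honest involution of $O(h)$ with a non-isotropic $(-1)$-eigenline $\ell$. Conjugating by powers of $A$ produces a family of orthogonal reflections $C_k=A^kCA^{-k}$ fixing lines $A^k\ell$; the composite $CC_k$ is then a rotation in the plane $\mathrm{span}(\ell,A^k\ell)$ whose angular eigenvalues are controlled by the spectrum of $A$ on that plane. The specific choice of $f_0, g_0$ (whose cyclotomic content $\Phi_1, \Phi_5$ and $\Phi_2, \Phi_4^2$ has orders coprime to $6$) together with the substitution $x\mapsto x^6$ in the $P, Q$-factors is arranged so that, for a well-chosen $k$ and some integer $N$, a spectral coincidence forces the eigenvalues of $(CC_k)^N$ to collapse and the matrix to become a unipotent orthogonal transvection (a single non-trivial $2\times 2$ Jordan block). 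Once one such transvection $u\in\Gamma$ is in hand, I would saturate it under conjugation by the rest of $\Gamma$: Zariski density together with a standard density argument in the root space of $U_P$ (in the spirit of Tits--Vaserstein and used e.g.\ in the symplectic work of \cite{SV}) shows that the subgroup so generated has finite index in $U_P(\Z)$ for an appropriate $\Q$-parabolic $P$. An appeal to Raghunathan--Venkataramana then completes the argument.

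The main obstacle is step (b): producing a concrete unipotent element. The exponents $k$ and $N$ must be identified explicitly and $(CC_k)^N$ verified to be unipotent, which amounts to a delicate spectral computation relating the $f_0$-eigenvalues of $A$ (fifth roots of unity) to the $P(x^6)$-eigenvalues (sixth roots of roots of $P$) and analogously for $B$. It is here that the exact choice $f_0=x^5-1$, $g_0=(x+1)(x^2+1)^2$, and the exponent $6$ in the substitution are critical: they are tailored so that the orders $5,2,4$ occurring in $f_0, g_0$ interact with the order-$6$ substitution to yield exactly the required coincidence.
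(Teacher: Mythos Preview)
Your outline has the right endgame (Zariski density plus a finite-index subgroup of some $U_P(\Z)$, then Raghunathan--Venkataramana), but the mechanism you propose for step~(b) is not the one that works, and the key structural lemma that makes the family uniform in $m$ is missing.

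\medskip

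\textbf{The unipotent is not produced by a spectral coincidence.} There is no integer $N$ for which $(CC_k)^N$ ``collapses'' to a unipotent via an eigenvalue argument of the kind you sketch. A product of two reflections $C_vC_{A^kv}$ is a rotation whose type is governed by whether $v\pm A^kv$ is isotropic; it is unipotent precisely when that happens (then it is a transvection along the isotropic line), not because powers eventually have coincident eigenvalues. In the base case $f_0,g_0$ one computes directly that $v\cdot v=2$, $Av\cdot v=1$, $A^2v\cdot v=2$, $A^3v\cdot v=2$, $A^4v\cdot v=1$, whence $\e=A^2v-v$ is isotropic and $u=C_{A^2v}C_v$ is a genuine unipotent fixing $\e$. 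One then checks that three reflections in $\Gamma(\e)$ act irreducibly on $\e^{\perp}/\Q\e\simeq\Q^3$, so conjugates of $u$ fill out a finite-index subgroup of $U(\Z)$ for the parabolic stabilising $\e$. This is an explicit $5\times 5$ computation, not a spectral one, and it gives arithmeticity of $\Delta=\Gamma(f_0,g_0)$ with $\Q$-rank $2$.

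\medskip

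\textbf{The missing idea: an isometry that is independent of $P,Q$.} For general $m$ you need to know why the same construction works. The point is that the $5$-dimensional subspace $W=\langle v,Av,A^2v,A^3v,A^4v\rangle\subset V$ is, as a quadratic space, \emph{isometric to $(V_0,h_0)$ regardless of $P,Q$}. This follows from a degree count: for $0\le k\le 4$ the remainder of $x^k(f-g)$ modulo $f$ has the same $x^{n-1}$-coefficient as the remainder of $x^k(f_0-g_0)$ modulo $f_0$ has in degree $4$, because the extra term $x^kg_0(P(x^6)-Q(x^6))$ has degree at most $6m+3<n-1$. Hence all the inner products $A^iv\cdot A^jv$ for $0\le i,j\le 4$ agree with the base case, the isotropic vector $\e=A^2v-v$ and the unipotent $u$ persist inside $W$, and the reflections $A^rCA^{-r}$ ($0\le r\le 4$) generate an arithmetic subgroup of $O(W)$. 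This is where the substitution $x\mapsto x^6$ really enters, not in any eigenvalue coincidence.

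\medskip

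\textbf{From $O(W)$ to $O(V)$.} Your ``saturate by conjugation'' sentence is in the right direction but hides a nontrivial step. What one actually proves is a bootstrap: if $\Gamma\subset SO(V,\Z)$ is Zariski dense and meets $SO(W,\Z)$ in finite index for a $5$-dimensional $W$ of $\Q$-rank $2$, then $\Gamma$ is arithmetic. The proof picks the parabolic $P$ of $O(V)$ stabilising a totally isotropic $2$-plane in $W$, uses Chevalley commutator relations in the $A_1\times A_1$ piece coming from $O(W)$, and exploits Zariski density via the open Bruhat cell to spread the resulting root subgroups across the full unipotent radical of $P$. Only then does the Raghunathan--Venkataramana criterion apply. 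Your proposal does not indicate how a single transvection in the big group would fill out $U_P(\Z)$; the intermediate arithmetic $O(2,3)$ is what makes this possible.
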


For          example,          if         \[f=(x^5-1)(x^{12}+x^6+1)^m,
\quad g=(x+1)(x^2+1)^2(x^{12}+1)^m, or, \]  
\[f=(x^5-1)(x^{12}-x^6+1)^m, \quad g=(x+1)(x^2+1)^2(x^{12}+1)^m ,\]
then $\Gamma  (f,g)$  is an  arithmetic group. \\

\begin{remark} Note that the degree of the representation is $n=6m+5$ 
with $m$ {\it arbitrary}. Therefore, we get infinitely many examples 
of higher rank orthogonal monodromy. \\

The polynomials $f_0,g_0$ of Theorem \ref{maintheorem} may be replaced
by any pair $ f_1,g_1$ for which  $\Q - rank (H)= 2$, where $H=O_5$ is
the Zariski  closure of the  $5\times 5$ hypergeometric  group $\Gamma
(f_1,g_1)$.  Thus the analogue of Theorem \ref{maintheorem} gives many
more examples  of arithmetic  monodromy. In section  \ref{O(2,3)}, 
more examples  of pairs  $f_1,g_1$ are given  for which  the monodromy
group  is arithmetic  in $O(2,3)$  and  the associated  group $G$  has
$\Q$-rank two.
\end{remark}

\begin{remark} Suppose $x=(x_1,\cdots,x_k)\in \Q^k$ and $r\in \Q$. Let
us    write   $x+r$    for    the   $k$-tuple    $(x_1+r,x_2+r,\cdots,
x_k+r)$.  Consider the foregoing  example $f=(x^5-1)(x^{12}+x^6+1)^m$.
Its  parameters are  the $5+12m$  tuple which  is obtained  by pasting
together  the  parameters   $(0,1/5,2/5,3/5,4/5)$  of  $f_0$  and  the
paramteters $(1/3,2/3)/6 +j/6$,  with $j=0,1,2,3,4,5$, with the latter
parameters  repeated  $m$ times  since  $x^{12}+x^6+1$  occurs to  the
exponent $m$. The parameters of $g$ can similarly be worked out.
\end{remark}

\subsection{Description of the Proof} We first show that when $P=Q=1$,
the monodromy  group is  an arithmetic subgroup  of $O(2,3)$.  This is
proved  by  showing that  the  reflection  subgroup  generated by  the
elements  $A^kCA^{-k}$ ($k\in  \Z$), is  arithmetic.   We prove  the
arithmeticity   of   the  reflection   group   $\Delta$  by   explicit
computation, by showing that $\Delta $ contains an arithmetic subgroup
of the  unipotent radical of a parabolic  subgroup.  The arithmeticity
then follows by appealing to a  generalization of a Theorem of Tits on
unipotent    generators   of    arithmetic    groups   (see    Theorem
\ref{bamise}). \\

We then  prove the general case by  using Proposition \ref{bootstrap}.
The proposition  says the  following: if $\Gamma$  is a  Zariski dense
subgroup of  $O(h)(\Z)$ where $h$  is a non-degenerate  quadratic form
over  $\Q$, such  that $\Gamma$  contains a  finite index  subgroup of
$O(W)(\Z)$, for  some $5$ dimensional  $W$ with $\Q-rank  (W)=2$, then
$\Gamma$ is  itself an  arithmetic group. \\

In section \ref{O(2,3)}, we list the pairs $f_1,g_1$ (up to a scalar
shift,  i.e. changing  $f(x)\mapsto  f_1(-x)$ $g_1(x)\mapsto  g_1(-x)$
-see  \cite{Beu-Hec},  and   \cite{FMS})  of  degree  $5$,  satisfying
$f_1(0)=-1,g_1(0)=1$  such  that   the  hypergeometric  group  $\Gamma
(f_1,g_1)$ is  arithmetic. Two  of the groups  $G$ have  $\Q$-rank one
while the  rest have $\Q$-rank  two.  The proof  of arithmeticity in
these cases is similar that of the group  $\Delta $ considered above. \\

The authors of \cite{FMS} also  give a list of hypergeometric $O(2,1)$
and they prove that in each  case the group is arithmetic.  An earlier
version of the present paper  used these computations in \cite{FMS} to
put  together various arithmetic  $O(2,1)$'s to  deduce arithmeticity;
however, this method does not cover  as many cases as the present one,
and in addition, the present proof is more uniform.

\newpage

\section{Preliminary Results}

\subsection{The Quadratic Form $h$}

\begin{notation} We  will view the  quadratic vector space $V$  as the
$\Q$-algebra $V=\Q[x]/(f(x))$  and the operator  $A$ as multiplication
by  $x$; then  with respect  to the  basis $1,x,\cdots,  x^{n-1}$, the
matrix   of   $A$   is   the   companion   matrix   of   $f$.    Write
$V_{\Z}=\Z[x]/(f(x))  \subset V$.  We assume  that $f\in  \Z[x]$  is a
product of cyclotomic polynomials, with $f(0)=-1$. \\

Let  $g\in  \Z[x]$  be   a  product  of  cyclotomic  polynomials  with
$g(0)=1$. Assume $f,g$ have no common root.  We introduce the operator
$B$  on $V$ by  setting $B(w)=A(w)=w$  if $w=1,x,\cdots,  x^{n-2}$ and
$B(x^{n-1})=  x^n-g(x)$; the latter  being a  polynomial of  degree at
most  $n-1$, has been  viewed as  an element  of $V_{\Z}$.   Denote by
$\Gamma =\Gamma (f,g)$ the group  generated by the two matrices $A,B$.
Following \cite{Beu-Hec},  we say that $f,g$ form  an {\it imprimitive
pair}  if  the the  vector  space  $V$ splits  into  a  direct sum  of
subspaces $V_i$ such that each  element of the group $\Gamma$ permutes
these spaces  $V_i$; if not, we  say that $f,g$ form  a {\it primitive
pair}.  We assume henceforth that $f,g$ form a primitive pair.  \\

Let $h$ be  the quadratic form preserved by  $A,B$; by \cite{Beu-Hec},
such a form  exists and is unique up to scalar  multiples. To ease the
notation, we  write $x.y=h(x,y)$  for $x,y\in V$. \\  

The following  observations are taken from  \cite{FMS}, section (2.4).
Since $A$ and  $B$ coincide on the span of  first $n-1$ basis elements
$1,x,\cdots,  x^{n-2}$, it follows  that if  $C=A^{-1}B$, then  $C$ is
identity  on $1,x,\cdots,  x^{n-2}$  and  the image  of  $C-1$ is  one
dimensional. Moreover, $(C-1)(V_{\Z})$ is of  the form $\Z v$ for some
$v\in V_{\Z}$.  Therefore, $(C-1)(x^{n-1})= v$.  Since the determinant
of  $C$  is  $-1$,  we   have  $C^2-1=0$;  it  follows  that  $Cv=-v$.
Consequently,  $v$  is  orthogonal  to $1,x,\cdots,  x^{n-2}$.   Hence
$x^{n-1}.v\neq 0$. We normalise $h$ so that $x^{n-1}.v=1$.  Therefore,
for any vector $w=u_0+u_1x+\cdots+u_{n-1}x^{n-1}\in V$ ($ u_i\in \Q$),
we have $w.v=u_{n-1}$. Denote by  $\lambda $ the linear form $w\mapsto
u_{n-1}$.\\

It  follows that  $Av=  A(C-1)(x^{n-1})=(B-A)(x^{n-1})=g-f$ where  the
latter is viewed as a linear combination of $1,x,\cdots, x^{n-1}$ i.e.
an element of $V_{\Z}$; since $f,g$  are monic of degree $n$, $f-g$ is
a polynomial  of degree not exceeding  $n-1$ and there is  no abuse of
notation.
\end{notation}

\begin{lemma}   \label{innerproduct} {\rm ( see  \cite{FMS},   Proposition
(2.10) )} Under the preceding notation and normalisation of $h$, we have
the formulae
\[v.w=u_{n-1} \quad \forall w\in V,\]
\[v.v=2 \quad {\rm and}\] 
\[C (w)=w-(w.v)v, \quad \forall w\in V.\] 
\end{lemma}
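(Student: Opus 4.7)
The first identity is essentially built into the setup. Since $C$ is an orthogonal involution whose $(-1)$-eigenspace contains $v$ and whose $(+1)$-eigenspace contains $1,x,\ldots,x^{n-2}$, these eigenspaces are $h$-orthogonal; hence $v\cdot x^i=0$ for $i\leq n-2$. Combined with the normalisation $v\cdot x^{n-1}=1$, linearity in $w$ yields $v\cdot w=u_{n-1}$.

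The substantive work is the computation of $v\cdot v$. By the first identity applied to $w=v$, this quantity equals the $x^{n-1}$-coefficient $v_{n-1}$ of $v$ in the basis $1,x,\ldots,x^{n-1}$. I would determine $v_{n-1}$ by computing $Av$ in two ways. On one hand, the relation $Av=(B-A)(x^{n-1})$ already established in the preceding discussion exhibits $Av$ as the explicit polynomial $g-f$ of degree $\leq n-1$, whose constant term is $g(0)-f(0)=1-(-1)=2$. On the other hand, writing $v=\sum_i v_i x^i$ and multiplying by $x$, the only contribution to the constant term of $Av$ comes from reducing $v_{n-1}\,x^n$ modulo $f$; using $x^n\equiv -(a_{n-1}x^{n-1}+\cdots+a_0)\pmod{f}$ with $a_0=f(0)=-1$, this constant term equals $-v_{n-1}\,a_0=v_{n-1}$. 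Equating the two expressions gives $v_{n-1}=2$ and hence $v\cdot v=2$.

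The third identity then follows formally from the first two. Since $C$ is an orthogonal involution with one-dimensional $(-1)$-eigenspace $\Q v$ and $(+1)$-eigenspace $v^\perp$, it is the orthogonal reflection through the hyperplane $v^\perp$. The standard reflection formula reads
\[
C(w)=w-\frac{2(w\cdot v)}{v\cdot v}\,v,
\]
and substituting $v\cdot v=2$ from the previous step turns this into $C(w)=w-(w\cdot v)\,v$.

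The only genuine computation is the evaluation of $v_{n-1}$ in the second step, which reduces to careful bookkeeping with the relations $a_0=f(0)=-1$ and $g_0=g(0)=1$; the other two identities are essentially formal consequences of this one together with the setup. I foresee no conceptual obstacle, only the usual vigilance with signs during the reduction modulo $f$.
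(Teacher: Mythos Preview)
Your argument is correct and follows essentially the same route as the paper. For the key step $v\cdot v=2$, you compare the constant term of $Av=g-f$ with the constant term of $x\cdot v$ reduced modulo $f$, whereas the paper equivalently computes $v=\frac{1}{x}(Av)$ and reads off the $x^{n-1}$-coefficient; both arguments hinge on the same two facts, $g(0)-f(0)=2$ and $a_0=f(0)=-1$. Your use of the abstract reflection formula for the third identity is a harmless variant of the paper's verification on the basis $1,\dots,x^{n-2},v$.
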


\begin{proof}  The  first  part  is  already proved  (see  the  second
paragraph preceding the  Lemma); so we need only  prove the second and
the third equalities.  Note that (by the first formula) the orthogonal
complement to  $v$ is  exactly the span  of $1,x,\cdots,  x^{n-2}$. We
have   also  seen  that   $Av=g-f=c_{n-1}x^{n-1}+\cdots+c_1x+2$  where
$c_i=b_i-a_i$.  The operator on $V$  given by multiplication by $x$ is
invertible. The equation
\[f(x)=x^n+a_{n-1}x^{n-1}+\cdots+a_1x-1\equiv 0\in V\]
shows that {\it in $V$},  
\[\frac{1}{x}\equiv   x^{n-1}+a_{n-1}x^{n-2}+\cdots  +a_2x+a_1,\]  and
that
\[\frac{1}{x}(x^k)=x^{k-1}  \quad  \forall k  \quad  with \quad  1\leq
k\leq n-1.\] Therefore,
\[v=\frac{1}{x}(Av)= \frac{1}{x}(c_{n-1}x^{n-1}+\cdots +c_1x+2)=\]
\[=c_{n-1}x^{n-2}+\cdots+c_2x+c_1+
2(x^{n-1}+a_{n-1}x^{n-2}+\cdots+a_1).\]  The last equality  shows that
the coefficient of  $x^{n-1}$ in $v$ (viewed as  linear combination of
$1,x,\cdots,x^{n-1}$) is exactly two.  Therefore $v.v=2$.  \\

The operator  $C$ is identity  on $1,x,\cdots,x^{n-2}$ and is  $-1$ on
$v$. The operator $w\mapsto w-(w.v)v$ is also identity on $1,x,\cdots,
x^{n-2}$ since  $v$ is orthogonal to  $1,x,\cdots, x^{n-2}$; moreover,
$v-(v.v)v=-v$; therefore, the third equation in the Lemma follows.
\end{proof}

\begin{remark}  \label{importantremark}  Since  $g$  and  $f$  are  co
-prime,  the element  $Av=g-f$, viewed  as an  element of  the algebra
$V=\Q[x]/((f(x))$, is  invertible. Hence $Av$ is cyclic  for the action
of $A$ and hence so is $v$. Thus, $v,Av, \cdots,A^{n-1}v$ form a basis
of the vector space $V$. In particular, the inner products $A^iv.A^jv$
determine  the quadratic form  $h$; the  invariance of  the quadratic
form  $h$  under $A$  implies  that $h$  is  determined  by the  inner
products $A^iv.v$  ($0\leq i \leq n-1$).   By Lemma \ref{innerproduct}
the latter is just the $x^{n-1}$-th coefficient of $A^iv$, viewed as a
linear combination of $1,x, \cdots, x^{n-1}$.  Hence $h$ is determined
by the ``highest coefficients'' of the remainders of the polynomials 
$g-f,x(g-f), \cdots, x^{n-1}(g-f)$, after division by $f$.
\end{remark}

\begin{lemma} \label{realrank}  Suppose {\rm two} of the  roots of $f$
(or of  $g$) occur with multiplicity  two {\rm (} i.e.  suppose $f$ is
divisible by the square of a  quadratic polynomial {\rm )}. Then $\R -
rank(V)\geq 2$. If $n\geq 7$, then $\Q-rank (V)\geq 2$.
\end{lemma}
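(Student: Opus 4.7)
The plan is to exploit the Jordan decomposition of $A$ on $V \otimes \C$ at a double eigenvalue to produce a totally isotropic real $2$-plane, and then to lift from $\R$-rank to $\Q$-rank via Meyer's theorem. First I would argue that the two double roots of $f$ must in fact be a pair of non-real complex conjugates $\alpha, \bar\alpha$: since $f$ is a product of cyclotomic polynomials, its only real roots are $\pm 1$, and if $\pm 1$ were a root of multiplicity exactly $2$, then (because the minimal polynomial of $A$ equals $f$) the generalized eigenspace would be a single Jordan block of size $2$ for $A|_{V_{\pm 1}} = \pm I + N$ with $N \ne 0$, $N^2 = 0$; a short $2 \times 2$ computation from $A^t h A = h$ then forces $h$ on that block to be degenerate, contradicting non-degeneracy of $h$ since the block is $h$-orthogonal to every $V_\gamma$ with $\gamma \ne \pm 1$. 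Once $\alpha$ is non-real, the generalized eigenspaces $V_\alpha, V_{\bar\alpha} \subset V \otimes \C$ are each $2$-dimensional, and the standard argument (from $h(A^k u, v) = h(u, A^{-k} v)$ combined with $(A - \beta)^N u = 0$ for $u \in V_\beta$) shows $V_\beta \perp V_\gamma$ whenever $\beta\gamma \ne 1$; applied with $\beta = \gamma = \alpha$ and $\alpha^2 \ne 1$, it gives that $V_\alpha$ is totally isotropic of complex dimension $2$.

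The main step is computing the signature of $h$ restricted to the real form $W := (V_\alpha \oplus V_{\bar\alpha}) \cap V$. I would pass to the induced Hermitian form $\langle x, y \rangle := h(x, \bar y)$ on $V_\alpha \simeq \C^2$, which is preserved by $A|_{V_\alpha} = \alpha I + N$. Since $|\alpha| = 1$ and $N \ne 0$, the operator $\alpha^{-1} A|_{V_\alpha} = I + \alpha^{-1} N$ is a non-trivial unipotent element of the unitary group of this Hermitian form; but the unitary group of a definite Hermitian form in dimension $2$ is compact and hence contains no non-trivial unipotent. Therefore the Hermitian form must be indefinite, and so has signature $(1, 1)$ in dimension $2$. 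Under the standard dictionary, a Hermitian form of signature $(p, q)$ on $V_\alpha$ yields a real symmetric form of signature $(2p, 2q)$ on $W$; hence $h|_W$ has signature $(2, 2)$ and contains a $2$-dimensional totally isotropic real subspace, proving $\R - rank(V) \geq 2$. I expect this Hermitian-signature step to be the only real obstacle.

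For the $\Q$-rank statement I would invoke Meyer's theorem: a non-degenerate anisotropic indefinite quadratic form over $\Q$ has dimension at most $4$. Let $r$ be the Witt index of $h$ over $\Q$ and $h_0$ the anisotropic kernel, so $\dim h_0 = n - 2r$ and $h_0$ has signature $(p - r, q - r)$ for the $\R$-signature $(p, q)$ of $h$. If $r \geq \min(p, q)$, then $r \geq 2$ directly by the first part (which gives $\min(p, q) \geq 2$). Otherwise $h_0$ is indefinite over $\R$, so Meyer gives $n - 2r \leq 4$, i.e., $r \geq (n-4)/2 \geq 3/2$ when $n \geq 7$, whence $r \geq 2$ since $r \in \Z$. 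Either way, $\Q - rank(V) \geq 2$. The case when $g$ has two double roots is symmetric, with $B$ in place of $A$.
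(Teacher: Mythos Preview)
Your argument is correct and takes a genuinely different route from the paper's. The paper quotes the Beukers--Heckman combinatorial formula for the signature gap,
\[
|p-q| \;=\; \Bigl|\sum_{j=1}^n (-1)^{j+m_j}\Bigr|,
\]
where $m_j$ counts the $\beta_k$ lying below $\alpha_j$, and simply observes that a repetition $\alpha_j=\alpha_{j+1}$ forces $m_j=m_{j+1}$, so the two corresponding terms cancel; with two repeated roots four terms cancel, hence $|p-q|\le n-4$ and $\min(p,q)\ge 2$. No Jordan blocks, no Hermitian forms, and $f$ and $g$ are treated on a completely equal footing without ever mentioning $A$ or $B$.

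Your approach instead works structurally on $V\otimes\C$: you use cyclicity of $V$ for $A$ to get an honest $2$-dimensional Jordan block at the double eigenvalue, rule out the real eigenvalues $\pm 1$ by the $2\times 2$ degeneracy computation (which is valid and indeed shows those cases cannot occur under the standing non-degeneracy of $h$), and then for non-real $\alpha$ run the compactness argument on the Hermitian form $h(x,\bar y)$ on $V_\alpha$ to force signature $(1,1)$, hence $(2,2)$ on the real $4$-plane $(V_\alpha\oplus V_{\bar\alpha})\cap V_{\R}$. This is more self-contained---it does not import the Beukers--Heckman signature formula---and it tells you explicitly where the isotropic $2$-plane lives. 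The paper's proof is shorter once that formula is granted. For the $\Q$-rank step both arguments are Meyer/Hasse--Minkowski; you have written out the Witt-kernel bookkeeping (the dichotomy $r\ge\min(p,q)$ versus $n-2r\le 4$) more carefully than the paper, which just asserts the conclusion.
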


\begin{proof} Write 
\[f(x)=\prod _{j=1}^n (x-e^{2\pi i \alpha _j}), \quad g(x)=\prod _{j=1}^n (x-e^{2\pi i \beta _j}).\]
We assume, as we may, that 
\[0\leq \alpha _1\leq \cdots \leq \alpha _n <1 \quad {\rm and} \quad  
0\leq \beta _1\leq \cdots \leq \beta _n <1.\]

If $(p,q)$ is the  signature of the quadratic form, then
the real rank is  $\frac{1}{2}(p+q-\mid p-q\mid)$.  There is a formula
(\cite{Beu-Hec}, or \cite{FMS}) for $\mid p-q\mid$:
\[\mid  p-q\mid  =\mid  \sum  _{j=1}^n (-1)^{j+m_j}\mid  \]  
where $m_j$  is the  {\it number}  of indices $k$  such that  $\beta _k
<\alpha _j$.   \\

If $f$ has one root with multiplicity one i.e. for some index $j$ we have 
$\alpha_j=\alpha _{j+1}$, then $m_j=m_{j+1}$ ($=m$, say). Hence
\[(-1)^{j+m_j}+(-1)^{j+1+m_{j+1}}=(-1)^{j+m}+(-1)^{j+1+m}=0.  \] Hence
two terms in the above expression  for $\mid p-q \mid $ cancel out and
$\mid p-q\mid \leq p+q-2$.\\

Similarly, if  there are {\it  two} roots with multiplicity  two, then
$\mid p-q\mid \leq p+q-4$. Hence 
\[\R-rank (V)=\frac{1}{2}(p+q-\mid p-q\mid)\geq \frac{1}{2}(4)=2.\] 

The  second  part  of  the  Lemma,  is  an  easy  consequence  of  the
Hasse-Minkowski theorem: if a rational quadratic form in at least five
variables represents a real zero, then it represents a rational zero.
\end{proof}

\subsection{The Quadratic Forms $h$ and $h_0$}

\begin{notation}     We     now     consider     $f_0=(x^5-1)$     and
$g_0=(x+1)(x^2+1)^2$;  they form a  primitive pair  by \cite{Beu-Hec}.
Denote the  associated monodromy  group by $\Delta  =\Gamma (f_0,g_0)$
generated   by   the  companion   matrices   $A_0,B_0$  of   $f_0,g_0$
respectively;  it  is  a   subgroup  of  $GL_5(\Z)$  and  preserves  a
non-degenerate quadratic form  $h_0$ on $V_0=\Q ^5=\Q[x]/(f_0(x))$. We
have the vector $v_0\in V_0$ as before (we have denoted the vector $v$
of the  previous subsection  in this case  ($n=5$) by $v_0$).  We view
elements   $w$  of   $V_0$  as   polynomials  of   degree   $\leq  4$:
$w=u_4x^4+u_3x^3+u_2x^2+u_1x+u_0$ with $u_i\in \Q$. Denote by $\lambda
_0$ the linear  form $w\mapsto u_4$.  It is easily  seen that $V_0$ is
spanned by the vectors $v_0,A_0v_0,A_0 ^2v_0,A_0 ^3v_0,A_0 ^4v_0$. \\

Fix an integer  $m\geq 0$ and put $n=6m+5$. Let  $P,Q\in \Z[x]$ be two
(monic) polynomials  of degree $m$,  which are products  of cyclotomic
polynomials  such   that  $P(0)=Q(0)=1$.   Consider   the  polynomials
$f(x)=f_0(x)P(x^6)$ and  $g(x)=g_0(x)Q(X^6)$.  Then $f,g$  have degree
$n=6m+5$.  We assume  that $f,g$ are co-prime and  that $(f,g)$ form a
primitive  pair.  Let  $\Gamma  =\Gamma (f,g)$  be the  hypergeometric
group.  We have the vector  $v$ in $V=\Q[x]/(f(x))$. Denote by $W$ the
span of the vectors  $v,Av,A^2,A^3v,A^4v$. We have an injective linear
map $i: V_0\ra V$  given on the basis elements  
$\{ A_0^kv_0 ; 0\leq k \leq 4\}$ by the formula $A_0^kv_0\mapsto A^kv$.
\end{notation}

We first prove an easy preliminary Lemma.

\begin{lemma} \label{prelim} If $k\leq 4$, then the $x^{n-1}$-th 
coefficient of the remainder $R(x)$ of $x^k(f-g)$ upon division by $f$
is  the same  as $x^4$-th  coefficient  of the  remainder $R_0(x)$  of
$x^k(f_0-g_0)$ upon division by $f_0$.
\end{lemma}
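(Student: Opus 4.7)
The plan is to decompose $x^k(f-g)$ so that only one explicit ``top'' term is responsible for the $x^{n-1}$-coefficient of the remainder modulo $f$, and then match this reduction against the analogous reduction modulo $f_0$.

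First, I would use the polynomial identity
\[f - g = f_0\bigl(P(x^6) - Q(x^6)\bigr) + (f_0 - g_0)\,Q(x^6),\]
and multiply by $x^k$ with $0 \leq k \leq 4$. Since $P,Q$ are monic of the same degree $m$, the polynomial $P(x^6)-Q(x^6)$ has degree at most $6(m-1)$, so the first summand $x^k f_0\bigl(P(x^6)-Q(x^6)\bigr)$ has degree at most $6m+3<n-1$. Writing $Q(x^6)=\sum_{j=0}^{m} q_j x^{6j}$ with $q_m=1$, the terms with $j<m$ in $x^k(f_0-g_0)Q(x^6)$ similarly have degree at most $6(m-1)+k+4 \leq n-3$. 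All such pieces need no reduction modulo $f$ and contribute $0$ to $[x^{n-1}]$. The problem therefore reduces to computing $[x^{n-1}]\bigl(x^{6m+k}(f_0-g_0) \bmod f\bigr)$.

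Next, I would exploit the sparse structure of $f=(x^5-1)P(x^6)$: every monomial of $f$ has exponent congruent to $0$ or $5 \pmod 6$, so the coefficients of $x^{n-1},x^{n-2},x^{n-3},x^{n-4}$ in $f$ vanish while the coefficient of $x^{n-5}$ equals $-1$. Hence $x^n \equiv x^{n-5} + T \pmod f$ with $\deg T \leq n-6$, and multiplying by $x^j$ for $0 \leq j \leq 4$ keeps everything of degree $<n$, giving
\[[x^{n-1}]\bigl(x^{n+j} \bmod f\bigr) = \delta_{j,4} \qquad \text{for } 0 \leq j \leq 4.\]
Writing $f_0-g_0=\sum_{i=0}^{4} c_i x^i$ and expanding $x^{6m+k}(f_0-g_0)=\sum_i c_i x^{n-5+k+i}$, the monomial $c_i x^{n-5+k+i}$ contributes $c_i$ to $[x^{n-1}]$ either when $k+i=4$ (unreduced part) or when $k+i=9$ (reduced part); since $k,i \leq 4$ the latter is impossible, so only $i=4-k$ contributes, yielding $[x^{n-1}]R = c_{4-k}$.

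Finally, for $R_0 = x^k(f_0-g_0) \bmod f_0$, the relation $x^5 \equiv 1 \pmod{f_0}$ gives $x^{k+i} \equiv x^{(k+i) \bmod 5}$, so $[x^4]R_0$ picks out the monomials with $k+i \equiv 4 \pmod 5$; for $0 \leq k,i \leq 4$ only $k+i=4$ survives, so $[x^4]R_0 = c_{4-k}$ as well. The only real subtlety is the degree bookkeeping needed to discard the non-top summands uniformly in $k$; once this is done, the matching of the two coefficients reflects the parallel ``five-term gap'' in the supports of $f$ near $x^n$ and of $f_0$ near $x^5$.
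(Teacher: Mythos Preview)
Your argument is correct, but it takes a different route from the paper's. The paper decomposes the other way around:
\[
x^k(f-g)=x^k(f_0-g_0)P(x^6)+x^kg_0\bigl(P(x^6)-Q(x^6)\bigr),
\]
so that the ``main'' term carries the factor $P(x^6)$ rather than $Q(x^6)$. The payoff is that no explicit reduction of $x^{n+j}$ modulo $f$ is needed: since $f=f_0\,P(x^6)$, one simply writes $x^k(f_0-g_0)=R_0+q_0f_0$ and multiplies through by $P(x^6)$ to get $x^k(f_0-g_0)P(x^6)=R_0\,P(x^6)+q_0 f$. Thus the remainder is $R_0(x)P(x^6)$ plus the low-degree tail, and the $x^{n-1}$-coefficient of $R_0(x)P(x^6)$ is visibly the $x^4$-coefficient of $R_0$ because $P(x^6)=x^{6m}+(\text{degree}\le 6m-6)$.

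Your version, by contrast, isolates the top monomial $x^{6m}$ of $Q(x^6)$ and then has to reduce $x^{6m+k}(f_0-g_0)$ modulo $f$ by hand, using the observation that $f$ is supported on exponents congruent to $0$ or $5$ modulo $6$ so that $x^n\equiv x^{n-5}+(\text{degree}\le n-6)$. This is a perfectly valid substitute, and the endgame ($c_{4-k}$ on both sides) is a nice closed-form confirmation. The paper's approach is a bit slicker because it exploits the factorization $f=f_0P(x^6)$ directly and never needs the explicit coefficient structure of $f$ near degree $n$; your approach trades that structural shortcut for a concrete monomial-by-monomial computation, which has the virtue of making the ``five-term gap'' mechanism completely explicit.
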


\begin{proof} 
We  may write $R_0(x)=x^k(f_0-g_0)+q_0(x)f_0(x)$  for some  $q_0$, and
$R(x)=x^k(f-g)+q(x)f(x)$ for some $q$. Consider the equation
\[x^k(f-g)=x^k(f_0P(x^6)-g_0Q(x^6))= \] 
\[=x^k(f_0-g_0)P(x^6)+x^kg_0(P(x^6)-Q(x^6)).\] 
Since  $P,Q$  are   monic  of  degree  $m$  in   $x$,  the  degree  of
$P(x^6)-Q(x^6)$  is   at  most   $6m-6$.  Therefore,  the   degree  of
$x^kg_0(P(x^6)-Q(x^6))$ is  at most $6m-2+4$ and hence  does not contribute
to the $x^{n-1}=x^{6m+4}$-th term. \\

The     polynomial     $x^k(f_0-g_0)P(x^6)$     may     be     written
$R_0(x)P(x^6)+q_0(x)f_0(x)P(x^6)$; since $f=f_0P(x^6)$ it follows that
the remainder  $R=R_0P(x^6)+x^kg_0(P(x^6)-Q(x^6))$. Hence, by the last conclusion 
of the preceding paragraph,   the coefficient of  $x^{4+6m}$ in
$R(x)$ is the coefficient of $x^4$ in $R_0(x)$.  The Lemma follows.

\end{proof}

\begin{lemma} \label{isometry} The above linear map $i$ is an isometry
of  the  quadratic  spaces  $(V_0,h_0)$  and  $(W,h_{\mid  _W})$.   In
particular,the   restriction  $h_{\mid   _W}$   of  $h$   to  $W$   is
non-degenerate;   write   the   orthogonal  decomposition   $V=W\oplus
W^{\perp}$.     The    group     generated    by    the    reflections
$A_0^rC_0A_0^{-r};0\leq r \leq 4$ is isomorphic to the group $\Delta $
generated by the reflections  $A^rCA^{-r}: 0\leq r \leq 4$.  Moreover,
$\Delta $ acts trivially on $W^{\perp}$.
\end{lemma}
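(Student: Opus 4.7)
The plan is to reduce the isometry claim to a matching of five Gram-matrix entries, and then read off the remaining structural assertions directly from Lemma \ref{innerproduct}.

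First, by $A$-invariance and symmetry of $h$, the Gram matrix $(A^i v\cdot A^j v)_{0\le i,j\le 4}$ of the basis $v,Av,A^2v,A^3 v,A^4 v$ of $W$ is determined by the five scalars $v\cdot A^\ell v$ for $0\le \ell\le 4$, and the same reduction is valid on $V_0$. So it suffices to show $v\cdot A^\ell v=v_0\cdot A_0^\ell v_0$ for $0\le \ell\le 4$. For $\ell=0$ both sides equal $2$ by Lemma \ref{innerproduct}. For $1\le \ell\le 4$, since $Av=g-f$ one has $A^\ell v=x^{\ell-1}(g-f)\bmod f$, and the identity $w\cdot v=u_{n-1}$ from Lemma \ref{innerproduct} identifies $v\cdot A^\ell v$ with the $x^{n-1}$-coefficient of that remainder; an identical computation in $V_0$ (with $n$ replaced by $5$) makes $v_0\cdot A_0^\ell v_0$ the $x^4$-coefficient of the remainder of $x^{\ell-1}(g_0-f_0)$ modulo $f_0$. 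Lemma \ref{prelim}, applied with $k=\ell-1\in\{0,1,2,3\}$, equates these two coefficients (after the harmless sign flip $g-f=-(f-g)$). Hence $i$ is an isometry; since $h_0$ is non-degenerate on $V_0$, this forces $h|_W$ to be non-degenerate and yields the orthogonal decomposition $V=W\oplus W^\perp$.

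Second, by Lemma \ref{innerproduct}, $C$ is the orthogonal reflection in $v$; since $A$ is an isometry, $A^r C A^{-r}$ is the reflection $w\mapsto w-(w\cdot A^r v)(A^r v)$. Because $A^r v\in W$ for $0\le r\le 4$, any $w\in W^\perp$ satisfies $w\cdot A^r v=0$ and is therefore fixed by $A^rCA^{-r}$; thus every element of $\Delta$ acts trivially on $W^\perp$. On $W$, the isometry $i$ intertwines $A_0^r C_0 A_0^{-r}$ with $A^r C A^{-r}|_W$, so the restriction map $\Delta\to O(W)$ carries $\Delta$ onto the reflection group generated on $V_0$ by $A_0^rC_0A_0^{-r}$; this map is injective because an element acting trivially both on $W$ and on $W^\perp$ is the identity of $V$.

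The substantive step is the matching of the five inner products, but this is exactly what Lemma \ref{prelim} is engineered to provide, exploiting monicity of $P$ and the fact that the ``error'' term $x^k g_0(P(x^6)-Q(x^6))$ has degree too low to reach $x^{n-1}$. Once Lemma \ref{prelim} is invoked, the present lemma is essentially a packaging of that computation together with the reflection formula of Lemma \ref{innerproduct}.
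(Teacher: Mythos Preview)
Your proof is correct and follows essentially the same approach as the paper: reduce the isometry to the five Gram entries $v\cdot A^\ell v$ via $A$-invariance, handle $\ell=0$ by Lemma~\ref{innerproduct}, and for $1\le\ell\le 4$ invoke Lemma~\ref{prelim} with $k=\ell-1$ to match the top coefficients of $x^{k}(g-f)\bmod f$ and $x^{k}(g_0-f_0)\bmod f_0$. You in fact supply more detail than the paper on the second half of the statement---the paper leaves the group isomorphism and the trivial action on $W^\perp$ implicit, whereas you spell out the reflection formula $A^rCA^{-r}:w\mapsto w-(w\cdot A^rv)A^rv$ and the injectivity of the restriction to $W$---and these additions are all correct.
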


\begin{proof} Since the map $i$  is linear, to check isometry, we need
only    check   that   the    inner   products    $h(A^kv,A^lv)$   and
$h_0(A_0^kv_0.A_0^lv_0)$  coincide if  $0\leq k,l  \leq 4$.  Using the
invariance of $h,h_0$ under $A,A_0$ it is sufficient to check that the
inner  products  $h(v,A^kv)$ and  $h_0(v_0,A_0  ^kv_0)$ coincide.  But
these values  are nothing but the $x^{n-1}$-coefficient  of the vector
$A^kAv=x^k(f-g)$,  viewed  as  a  linear combination  of  $1,x,\cdots,
x^{n-1}$ (similarly $h_0(A_0^kv_0,v_0)$ is the $x^4$-th coefficient of
the vector  $A_0^kA_0v_0=x^k(f_0-g_0)$ viewed as  a linear combination
of  $1,x,x^2,x^3,x^4$).  \\  

By Lemma \ref{prelim},  $x^{n-1}$ coefficient of $x^k(f-g)$ is the
same as  $x^4$-th coefficient of  $x^k(f_0-g_0)$. Therefore the
Lemma follows.
\end{proof}

\section{A Bootstrapping Step for integral Orthogonal Groups}

In  this section,  we prove  a  result (Proposition \ref{bootstrap})  which  will 
be  used in  the proof of Theorem \ref{maintheorem}.  The  result says that 
a subgroup of the
integral orthogonal  group has finite  index if it contains  finite index
subgroups of smaller integral orthogonal  groups.  \\

Let   $h$  be   a  non-degenerate   rational  quadratic   form   on  an
$n$-dimensional $\Q$ vector space $V$.  Suppose that $W\subset V$ is a
$5$  dimensional   subspace  on  which  the  restriction   of  $h$  is
non-degenerate.   and  such  that   if  $V=W\oplus  W^{\perp}$  is  an
orthogonal decomposition, then $O(W)$ may be viewed as the subgroup of
$V$ which  fixes $W^{\perp}$ point-wise. Assume  that $V_{\Z}\subset V$
is a lattice  on which $h$ takes integral  values. Denote by $O(V,\Z)$
the integer points of $O(V)$; define $O(W,\Z)$ similarly.

\begin{proposition} \label{bootstrap}  If $\Gamma  $ is a  Zariski dense
subgroup of  $SO(V,\Z)$ whose  intersection with $SO(W,\Z)$  has finite
index  in $SO(W,\Z)$,  then $\Gamma$  has finite  index  in $SO(V,\Z)$
provided $\Q-rank (W)= 2$
\end{proposition}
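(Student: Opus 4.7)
The plan is to produce inside $\Gamma$ a finite-index subgroup of $U_V(\Z)$, where $U_V$ is the unipotent radical of a $\Q$-parabolic of $SO(V)$, and then invoke the unipotent-generators theorem (Theorem \ref{bamise}) referenced earlier in the paper. Since $W \subseteq V$ is non-degenerate and $\Q\textrm{-rank}(W) = 2$, we have $\Q\textrm{-rank}(SO(V)) \geq 2$, which is the higher-rank hypothesis required to apply Theorem \ref{bamise}.

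Pick a maximal $\Q$-isotropic plane $L \subset W$. Its stabilizers in $SO(W)$ and $SO(V)$ are $\Q$-parabolics $P_W$ and $P_V$ with unipotent radicals $U_W \subseteq U_V$. By hypothesis $\Gamma \cap SO(W)(\Z)$ has finite index in $SO(W)(\Z)$, so it contains a finite-index subgroup $\Lambda_W$ of $U_W(\Z)$; in particular $\Lambda_W \subseteq U_V(\Z) \cap \Gamma$. Decomposing $U_V$ into root subgroups with respect to a $\Q$-split torus of a Levi of $P_V$, one sees that the root subgroups making up $U_W$ form a proper subset of those of $U_V$. Using the Zariski density of $\Gamma$ in $SO(V)$ (together with the integral Levi elements already available from $\Gamma \cap SO(W)(\Z)$, which sit inside the Levi of $P_V$), one seeks elements $\gamma \in \Gamma$ whose conjugation action on $U_V$ translates the root subgroups that occur in $\Lambda_W$ across the remaining root subgroups of $U_V$. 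Conjugates and commutators of $\Lambda_W$ against such $\gamma$ then generate, inside $\Gamma$, a subgroup of finite index in the full $U_V(\Z)$.

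With a finite-index subgroup of $U_V(\Z)$ located inside the Zariski dense $\Gamma \subset SO(V)(\Z)$, Theorem \ref{bamise} immediately gives $[SO(V)(\Z):\Gamma] < \infty$, as desired. The main obstacle is the upgrading step: verifying that the conjugates of $\Lambda_W$ by elements of $\Gamma$ actually produce a finite-index subgroup of each additional root subgroup $U_\alpha(\Z)$ of $U_V$, rather than merely a Zariski dense subgroup. This will hinge on exhibiting specific $\gamma \in \Gamma$ whose semisimple part acts with suitable rational eigenvalues on the root vectors of $U_V$ outside $U_W$, and on exploiting the integrality $\Gamma \subset SO(V)(\Z)$ to force the resulting unipotent elements to land in $U_\alpha(\Z)$. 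If the $\gamma$ can be chosen from $\Gamma \cap SO(W)(\Z)$ (whose Levi already sits inside the Levi of $P_V$), this step reduces to a concrete computation with the root data of $SO(V)$ relative to $SO(W)$; otherwise one must use the full Zariski-density hypothesis more substantively.
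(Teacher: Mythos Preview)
Your overall architecture is exactly the paper's: find a finite-index subgroup of $U_V(\Z)$ inside $\Gamma$ for the unipotent radical $U_V$ of the parabolic $P_V$ stabilising a $\Q$-isotropic plane $L\subset W$, then apply Theorem~\ref{bamise}. You also correctly locate the only real difficulty, namely the ``upgrading step'' from $U_W(\Z)$ to $U_V(\Z)$.

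However, neither of your two suggestions for that step works, and this is a genuine gap. Conjugation by elements of $\Gamma\cap SO(W)(\Z)$ cannot suffice: the Levi of $P_W$ is essentially $GL_2$, while the Levi of $P_V$ is $M=GL_2\times O(Y)$ (with $Y=L^{\perp}/L$ in $V$), and $M$ acts on $\fg_{x_1}\oplus\fg_{x_2}\simeq St\otimes St$ irreducibly only because of the $O(Y)$ factor. The $GL_2$-orbit of $\fh_{x_1}\oplus\fh_{x_2}$ stays inside the line over the fixed vector $w_3\in Y$ and never reaches the other directions of $Y$. Your alternative, ``use Zariski density more substantively'', is exactly what must be done, but you have not said how; and a generic $\gamma\in\Gamma$ does not normalise $U_V$, so naive conjugation throws $\Lambda_W$ outside $U_V$ entirely.

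The paper resolves this with a specific Bruhat-cell and commutator trick. One writes a Zariski-dense set of $\gamma\in\Gamma$ as $\gamma=p\,w_0\,u$ in the big cell $P_Vw_0U_V$. Since $U_V/\fg_{x_1+x_2}$ is abelian (Lemma~\ref{abelian}) and $\Gamma$ already contains $X_{x_1+x_2}(N\Z)=U_W\cap X_{x_1+x_2}$, the $u$-part can be absorbed into $\Gamma$. The $w_0$-part then flips $v\in X_{x_1}(N\Z)\cap\Gamma$ to $X_{-x_1}$, and the Chevalley commutator $[X_{-x_1}(s),X_{x_1+x_2}(t)]$ (Lemma~\ref{O_5specialcase}, which is the integral form of Lemma~\ref{chevalleycommutator}) produces $X_{x_2}(M\Z)$ for a controlled $M$. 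Finally, the $p$-parts of finitely many such $\gamma$ are Zariski dense in $P_V$, and since $M$ acts irreducibly on $\fg_{x_1}\oplus\fg_{x_2}$, their conjugates of $X_{x_2}(M\Z)$ fill out a finite-index subgroup of $U_V(\Z)$. The essential idea you are missing is the use of $w_0$ to pass through a \emph{negative} root group, so that a commutator with $X_{x_1+x_2}$ (which you already have in full) yields new positive root directions.
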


\subsection{Unipotent generators for arithmetic groups}

The following  theorem (see \cite{Ra},  \cite{Ve}) is an  extension to
all simple groups,  and all opposing parabolic subgroups,  of a result
of Tits (the result of  Tits \cite{Ti} was proved for Chevalley Groups
of $K$ rank at least two).

\begin{theorem}  \label{bamise} Suppose  $G$ is  an  absolutely almost
simple linear  algebraic group defined  over a number field  $K$, such
that $K$-rank  of $G$ is $\geq  1$ and $G(O_K)$ has  higher real rank,
i.e.
\[\infty  - rank  (G)  \stackrel  {def}{=} \sum  _{v \mid  \infty}
K_v-rank (G) \geq 2.\] Suppose  $P$ is a parabolic $K$-subgroup of $G$
with  unipotent   radical   $U$  and   let  $P^{-}$  be   a  parabolic
$K$-subgroup  defined  over $K$  and  opposed  to  $P$ with  unipotent
radical $U^{-}$.  Let  $\Gamma \subset G(O_K)$  be a  subgroup which
intersects $U(O_K)$ in  a finite index subgroup {\rm  (} and similarly
with $U^{-}(O_K)${\rm )}. Then $\Gamma $ has finite index in $G(O_K)$.
\end{theorem}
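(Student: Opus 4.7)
The plan is to follow Tits' strategy for Chevalley groups of $K$-rank $\geq 2$ and replace the explicit Steinberg relations with the relative root datum of $G$ combined with strong approximation. First I would reduce to the case $G$ simply connected: the simply connected cover $\widetilde G\to G$ is a central isogeny, an isomorphism on unipotent subgroups in characteristic zero, and the hypotheses on $K$-rank and $\infty$-rank transfer. Setting $\Lambda=\langle\Gamma\cap U(O_K),\,\Gamma\cap U^-(O_K)\rangle\subseteq\Gamma$, it suffices to prove that $\Lambda$ contains a principal congruence subgroup $G(O_K,I)$ for some nonzero ideal $I\subset O_K$, since such subgroups have finite index in $G(O_K)$ by the finiteness of $G(O_K/I)$.

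The heart of the argument is the production of such an $I$. Decompose $U$ into relative root subgroups $U_\alpha$ with respect to a maximal $K$-split torus contained in a Levi of $P$, and $U^-$ into the opposite root groups $U_{-\alpha}$. For each pair of opposite relative roots $\pm\alpha$, analysing the commutators of the finite-index subgroups $\Lambda\cap U_{\pm\alpha}(O_K)$ inside the rank-one $K$-subgroup $G_\alpha$ they generate produces, via Bruhat-type relations, non-trivial elements of a $K$-torus sitting in the Levi. Iterating across the full relative root system, one captures up to finite index the $O_K$-points of all relative root subgroups of a minimal $K$-parabolic and of a maximal $K$-split $K$-torus. An induction on the semisimple $K$-rank — the $K$-rank $\geq 2$ case being essentially Tits' theorem, and the $K$-rank $= 1$ case being the content of the extensions in \cite{Ra} and \cite{Ve} — then assembles these generators into a common principal congruence subgroup of $G(O_K)$.

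The main obstacle lies in the $K$-rank $=1$ case together with the non-Chevalley nature of a general $G$: the relative root subgroups $U_{\pm\alpha}$ need not be abelian, and the Levi of a minimal $K$-parabolic carries a nontrivial anisotropic kernel that interferes with the commutator computations. The higher real rank assumption $\infty$-rank$(G)\geq 2$ enters precisely here, through strong approximation and the structure of $G(K_v)$ at archimedean places $v$ where $G$ has higher local rank; this is what permits control of the anisotropic kernel and upgrades the Chevalley-style generation argument to its non-split analogue. Handling this passage from finite-index subgroups of the unipotent radicals $U(O_K)$, $U^-(O_K)$ to a genuine principal congruence subgroup of $G(O_K)$ is the substantive content that \cite{Ra} and \cite{Ve} add to Tits' original Chevalley setting.
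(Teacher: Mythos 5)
The paper does not prove this statement at all: it is quoted verbatim from the literature (\cite{Ra}, \cite{Ve}, extending \cite{Ti}), so there is no in-paper proof to compare against. Judged on its own terms, your sketch has genuine gaps. The most serious is the final mechanism: you make strong approximation carry the weight of upgrading ``enough unipotent generators'' to finite index. Strong approximation cannot do this --- by Nori--Weisfeiler a \emph{thin} Zariski-dense subgroup already has open closure in the congruence (adelic) topology, so no approximation statement distinguishes $\Lambda$ from a thin group. The actual engine in \cite{Ra} and \cite{Ve} is of a different nature: one shows that $\Lambda=\langle U(O_K),U^-(O_K)\rangle$ (up to finite index) is normalized by a subgroup of finite index in $G(O_K)$ and then invokes Margulis' normal subgroup theorem, which is exactly where the hypothesis $\infty\text{-rank}(G)\geq 2$ enters; alternatively one feeds the elementary subgroup into the Bass--Milnor--Serre/Vaserstein/Matsumoto congruence machinery. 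Neither ingredient appears in your sketch. Relatedly, your target of exhibiting a full principal congruence subgroup inside $\Lambda$ is stronger than the theorem and essentially presupposes the congruence subgroup property.

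The intermediate step is also broken. For a pair of opposite relative roots $\pm\alpha$, the subgroup of the rank-one group $G_\alpha(O_K)$ generated by finite-index subgroups of $U_{\pm\alpha}(O_K)$ need \emph{not} produce torus elements or anything of finite index: already in $SL_2(\Z)$ the group generated by $\left(\begin{smallmatrix}1&N\\0&1\end{smallmatrix}\right)$ and $\left(\begin{smallmatrix}1&0\\N&1\end{smallmatrix}\right)$ is free and of infinite index for $N\geq 3$. This is precisely why the theorem is global in nature and cannot be assembled root-by-root. Finally, your induction is circular: you propose to handle the $K$-rank one case by citing \cite{Ra} and \cite{Ve}, but those papers \emph{are} the proof of the theorem you are trying to establish, so they cannot serve as the base case of your own argument.
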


\subsection{Algebraic Groups} \label{alggroups}

The reference for the material in this subsection is \cite{Bor-Ti}. 

\begin{notation}  Let  $G$ be  a  $\Q$-simple  linear algebraic  group
defined and isotropic  over $\Q$. Fix a maximal split  torus $S$ and a
minimal  parabolic subgroup  $P_0$ containing  $S$. Under  the adjoint
action of  $S$, the Lie  algebras $\fp _0$  and $\fg$ of $P$  and $G$,
decompose   as  follows  ($\Phi   ^+$  is   the  system   of  positive
roots,i.e.those {\it  roots} occurring in  $\fp _0$, and $\Phi$  is the
system of all roots)
\[\fp _0 = \fg _0\oplus _{\alpha \in \Phi ^+} \fg _{\alpha}, \quad \fg
_0=  \fg _0  \oplus _{\alpha  \in \Phi  ^+} \fg  _{\alpha  }\oplus \fg
_{-\alpha}.\]  Given  a  root  $\alpha$  denote  by  $U_{\alpha}$  the
unipotent algebraic  subgroup generated by  elements of the  form $exp
(X)$ for all $m\geq 1$ and all $X\in \fg _{m \alpha}$. Given $X\in \fg
_{\alpha}(\Q)$  we  get  a  one parameter  unipotent  group  $t\mapsto
X_{\alpha}(t):{\mathbb G}_a\ra U_{\alpha}$. 
\end{notation}

Suppose that  $\alpha, \beta  $ are two  roots which are  not rational
multiplies of each other. Then  necessarily $\Q -rank (G)\geq 2$. In a
group denote by $[x,y]$ the commutator element $xyx^{-1}y^{-1}$. \\

Before stating  the next  Lemma, we  fix some notation.  Let $X$  be a
finite totally  ordered indexing set and  $G$ a group.  Let $g_x\in G$
for each index $x\in X$. We let
\[P=\prod  _{x\in X}  g_x,\] denote  the  product of  $g_x$ where  the
product is taken  in the sense that if $x<x'\in  X$ then $g_x$ appears
to the left of $g_{x'}$.

We have the Chevalley commutator relations:  

\begin{lemma}  \label{chevalleycommutator}  Suppose   that  $G$  is  a
Chevalley  group. For  every pair  of integers  $m,n\geq 1$  such that
$m\alpha+n\beta  $ is  a  root, there  exist  one parameter  unipotent
groups $X_{m\alpha +n \beta}$ such that for all $s,t\in {\mathbb G}_a$
we have the commutator relation
\[  [X_{\alpha }(s),  X_{\beta }  (t)]=  \prod X_{m\alpha  +n \beta  }
(s^mt^n),\] where the product is of elements of the group $G$, and the
roots  $\theta  _{m,n}=  m\alpha  +n  \beta $  are  arranged  in  some
arbitrary but  fixed order (so  that if $m+n<m'+n'$ then  $\theta _{m,n}$
appears  before,  i.e.  to  the  left of,  $\theta  _{m',n'}$  in  the
product).
\end{lemma}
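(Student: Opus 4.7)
The plan is to show that the commutator lies in a small nilpotent subgroup $U^{>0}$ with an explicit product decomposition, to write it there as an ordered product of root-group elements with polynomial coefficients, and then to force the coefficients into the required shape by a torus-equivariance argument.

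Since $\alpha$ and $\beta$ are linearly independent, the set $\Psi=\{m\alpha+n\beta:m,n\geq 1\}\cap\Phi$ is finite; it lies in the positive part of the rank-two subsystem spanned by $\alpha$ and $\beta$, which is of type $A_1\times A_1$, $A_2$, $B_2$ or $G_2$. Let $U^{>0}$ be the closed connected unipotent subgroup of $G$ generated by the $U_\gamma$ for $\gamma\in\Psi$. After fixing any ordering of $\Psi$ compatible with root heights (so that $m+n<m'+n'$ implies $m\alpha+n\beta$ precedes $m'\alpha+n'\beta$), the product map $\prod_{\gamma\in\Psi}U_\gamma\to U^{>0}$ is an isomorphism of varieties by the standard structure theory of unipotent radicals (\cite{Bor-Ti}). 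Because $[\fg_\alpha,\fg_\beta]\subset \fg_{\alpha+\beta}$, every term of the Baker--Campbell--Hausdorff expansion of $[X_\alpha(s),X_\beta(t)]$ is an iterated bracket in which both $E_\alpha$ and $E_\beta$ occur; so the commutator projects to zero in $\fg_\alpha$ and $\fg_\beta$, and therefore lies in $U^{>0}$. Consequently
\[ [X_\alpha(s),X_\beta(t)] = \prod_{\gamma\in\Psi}X_\gamma(P_\gamma(s,t)) \]
for uniquely determined polynomial maps $P_\gamma:\mathbb G_a\times\mathbb G_a\to\mathbb G_a$.

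The decisive step is to constrain the shape of each $P_\gamma$ by exploiting the split maximal torus $T$ of $G$. For $h\in T$ and any root $\delta$ one has $h X_\delta(u) h^{-1}=X_\delta(\delta(h)u)$. Conjugating the displayed identity by $h$ and substituting $(m\alpha+n\beta)(h)=\alpha(h)^m\beta(h)^n$ gives
\[ P_\gamma(\alpha(h)s,\beta(h)t)=\alpha(h)^m\beta(h)^n\,P_\gamma(s,t) \]
for every $h,s,t$. Linear independence of $\alpha$ and $\beta$ as characters of the split torus makes $h\mapsto(\alpha(h),\beta(h))$ Zariski-dense in $\mathbb G_m^2$, so comparing monomials in this polynomial identity forces $P_\gamma(s,t)=c_\gamma s^m t^n$ for some constant $c_\gamma$. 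Replacing each parametrisation $X_\gamma$ (for $\gamma\in\Psi$) by $u\mapsto X_\gamma(c_\gamma u)$ absorbs the $c_\gamma$ and delivers the stated commutator relation.

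The main obstacle I anticipate is justifying the first step cleanly --- that $[X_\alpha(s),X_\beta(t)]$ really lies in $U^{>0}$ and that $U^{>0}$ admits the claimed ordered product decomposition. The BCH argument can be made rigorous by working inside the pro-nilpotent Lie algebra associated to the unipotent group generated by $U_\alpha$, $U_\beta$ and the $U_\gamma$; alternatively, one can embed everything in the rank-two Chevalley subgroup generated by $U_{\pm\alpha}$ and $U_{\pm\beta}$ and read off the commutator structure directly from the (short) classification of rank-two root systems. Once either route is set up, the torus argument completes the proof in a few lines.
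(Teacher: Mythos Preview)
The paper does not supply a proof of this lemma at all: it is stated as the classical Chevalley commutator formula and used as background, with the reference to \cite{Bor-Ti} covering the needed structure theory. So there is nothing in the paper to compare against.

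Your argument is correct and is one of the standard routes to the result. The two ingredients --- the ordered product decomposition of the closed unipotent subgroup on $\Psi=\{m\alpha+n\beta:m,n\geq 1\}\cap\Phi$, and the torus-equivariance identity $P_\gamma(\alpha(h)s,\beta(h)t)=\alpha(h)^m\beta(h)^n P_\gamma(s,t)$ forcing $P_\gamma$ to be a monomial --- are exactly what is needed, and your absorption of the structure constants $c_\gamma$ into the parametrisations is licensed by the ``there exist'' in the statement. For the step you flag as delicate (that $[X_\alpha(s),X_\beta(t)]\in U^{>0}$), it is cleanest simply to observe that $\alpha$ and $\beta$, being linearly independent, can be placed simultaneously in a positive system, so $U_\alpha$ and $U_\beta$ lie in a common unipotent radical; the standard filtration of that radical by height then gives $[U_\alpha,U_\beta]\subset U^{>0}$ directly, with no need to invoke BCH.
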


\begin{lemma}  \label{chevalleyintegral}   Let  $G$  be   a  Chevalley
group. Fix an integer $N\geq 1$ and consider the group $U_N$ generated
by commutators
\[[X_{\alpha }(s),X_{\beta }(t)];s,t \equiv 0 \quad (mod \quad N).\] in
$G(\Z)$. Then  there exists  an integer $M\geq  1$ such that  for {\bf each}
$m,n\geq  1$ the  group  $U_N$ contains  the  subgroup $X_{m\alpha  +n
\beta}(x)$ for $x\equiv 0 \quad (mod \quad M)$.

\end{lemma}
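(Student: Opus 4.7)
I would combine Lemma~\ref{chevalleycommutator} with an induction along the descending central series of the unipotent group generated by the root subgroups $X_{m\alpha+n\beta}$ ($m,n\geq 1$). Let $U\subseteq G$ denote this unipotent subgroup, and for $\gamma=m\alpha+n\beta$ with $m,n\geq 1$ set $h(\gamma)=m+n$; let $U^{(k)}$ be the closed subgroup generated by those $X_\gamma$ with $h(\gamma)\geq k$. Because heights add under sum of roots, the Chevalley relations give $[U^{(j)},U^{(k)}]\subseteq U^{(j+k)}$, so $U$ is nilpotent with central filtration $U=U^{(2)}\supsetneq U^{(3)}\supsetneq\cdots\supsetneq U^{(h_{\max})}\supsetneq U^{(h_{\max}+1)}=\{e\}$, and each abelian factor $U^{(k)}/U^{(k+1)}$ is free on the height-$k$ root subgroups.

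The key source of elements of $U_N$ comes from substituting $s=N\sigma$, $t=N\tau$ with $\sigma,\tau\in\mathbb{Z}$ into Lemma~\ref{chevalleycommutator}, giving
\[
\phi(\sigma,\tau):=[X_\alpha(N\sigma),X_\beta(N\tau)]=\prod_i X_{\theta_i}\!\bigl(c_i N^{h_i}\sigma^{m_i}\tau^{n_i}\bigr)\in U_N,
\]
a product ordered by increasing height $h_i=m_i+n_i$. I would then prove by upward induction on $k\in\{2,\ldots,h_{\max}\}$ the statement: there exists $M_k\geq 1$ such that for every root $\gamma$ of height $\leq k$, the image of $U_N$ in $U/U^{(k+1)}$ contains $X_\gamma(M_k\mathbb{Z})\cdot U^{(k+1)}/U^{(k+1)}$. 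Since $U^{(h_{\max}+1)}=\{e\}$, the case $k=h_{\max}$ is literally the lemma, with $M=M_{h_{\max}}$. The base case $k=2$ is immediate from $\phi(a,1)\equiv X_{\alpha+\beta}(cN^2 a)\pmod{U^{(3)}}$, so $M_2=cN^2$ works. For the inductive step, the previously constructed $M_2,\ldots,M_{k-1}$ supply enough elements of $U_N$ at heights $<k$ (modulo $U^{(k)}$) to cancel the ``lower-height tail'' of any product of $\phi$'s modulo $U^{(k+1)}$; what remains in $U^{(k)}/U^{(k+1)}\cong\bigoplus_{h(\theta_i)=k}\mathbb{Z}$ is the vector $(c_i N^k\sigma^{m_i}\tau^{n_i})_i$. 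Since the monomials $\sigma^{m_i}\tau^{n_i}$ of total degree $k$ (indexed by the distinct pairs $(m_i,n_i)$) are $\mathbb{Q}$-linearly independent, a Vandermonde interpolation over finitely many integer choices of $(\sigma,\tau)$ recovers each standard basis vector up to a common integer denominator, which defines $M_k$.

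The main obstacle will be the inductive step, in two parts: (i) killing the height-$<k$ tail of a combination of $\phi$'s using elements of $U_N$ built at earlier levels, and (ii) Vandermonde-isolating each height-$k$ basis vector up to a uniform $M_k$. Part (i) is complicated by the fact that $\phi(\sigma,\tau)$ and $\phi(\sigma',\tau')$ do not commute, but the central-series property $[U^{(j)},U^{(k)}]\subseteq U^{(j+k)}$ confines all commutation errors to strictly deeper strata, where they are absorbed by subsequent inductive steps; so one must organize the combinations so that at each level the ``correction terms'' live in $U^{(k+1)}$ and can be ignored when working modulo that subgroup. Part (ii) is an elementary but careful bookkeeping with Vandermonde determinants of monomials of fixed total degree over $\mathbb{Z}$, whose non-vanishing supplies the common denominator. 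Both steps are in principle routine, but the simultaneous management of integrality across all height-$k$ roots and all desired multiples is where the real work lies.
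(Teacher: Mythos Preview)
Your outline is workable but takes a genuinely different route from the paper. The paper argues as follows: let $U_0$ be the Zariski closure of $U_N$ (a unipotent $\Q$-group contained in the group $U^*$ generated by all the $X_{m\alpha+n\beta}$). Since a Zariski dense subgroup of the integer points of a unipotent group has finite index (Raghunathan), it suffices to show that each one-parameter group $X_{m,n}$ lies in $U_0$. For this the paper works at the Lie algebra level: the map $(s,t)\mapsto \log\bigl(\prod X_{m,n}(s^mt^n)\bigr)$ is a polynomial map into $\fu_0=\mathrm{Lie}(U_0)$, so every coefficient of this vector-valued polynomial lies in $\fu_0$; the coefficient of $st$ is a nonzero multiple of the root vector for $\alpha+\beta$, so $X_{1,1}\subset U_0$, and an induction in the height ordering yields the rest. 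Thus the paper replaces your integer-Vandermonde step by polynomial coefficient extraction over the Zariski closure, and replaces your inductive bookkeeping along the central series by a single appeal to the finite-index fact for unipotent groups. Your approach is more elementary (no Zariski closure, no external lattice fact), while the paper's is shorter and avoids all integrality bookkeeping.

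There is, however, a genuine gap in your step (i). You assert that after cancelling the height-$<k$ tail of $\phi(\sigma,\tau)$ using the elements produced at earlier stages, ``what remains in $U^{(k)}/U^{(k+1)}$ is the vector $(c_iN^k\sigma^{m_i}\tau^{n_i})_i$.'' This is not correct as written: the cancelling elements $g_\gamma(a)\in U_N$ satisfy $g_\gamma(a)\equiv X_\gamma(M_{k-1}a)\pmod{U^{(k)}}$, but their height-$k$ components are unspecified by the inductive hypothesis and contribute unknown terms to $U^{(k)}/U^{(k+1)}$. The commutation errors you discuss do land in $U^{(k+1)}$, but these extra height-$k$ contributions from the $g_\gamma$'s do not. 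The fix is either to strengthen the inductive hypothesis (record the height-$k$ part of each $g_\gamma(a)$ as an explicit polynomial in $a$, and fold it into the Vandermonde system), or to bypass the issue entirely by using the same structural fact the paper uses: once you show $U_N$ hits a finite-index subgroup of each graded piece $U^{(k)}/U^{(k+1)}$ via your Vandermonde argument applied to $\phi(\sigma,\tau)$ alone (no cancellation needed in the abelian quotient $U/U^{(k+1)}$ when $k\le 3$, and via iterated commutators of $\phi$'s for larger $k$), standard nilpotent-group facts give finite index.
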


\begin{proof} The Zariski closure $U_0$ of the group $U_N$ of integral
matrices is  a unipotent group:  in view of the  commutator relations,
the root groups $U_{m\alpha  +n\beta}:m,n \geq 1$ generate a unipotent
group, say $U^*$. Now a Zariski dense subgroup of $U_0(\Z)$ has finite
index in $U_0(\Z)$  (see \cite {Ra1}).  Hence we  need only prove that
the one parameter groups $X_{m,n}$ lie in $U_0$ ($X_{m,n}$ denotes the
group $X_{m\alpha +n\beta}$). Denote by  $\fu _0$ and $\fu ^*$ the Lie
algebras of  $U_0$ and of  $U^*$. Denote by  $log$ the inverse  of the
exponential map from the unipotent group $U^*$ onto $\fu^*$.  \\\

Taking the  logs of  the commutators in  the commutator  relations, we
obtain that a polynomial $P$ in $t,s$ namely
\[P(t,s)=log  (\prod _{m,n}  (X_{m,n}(t^ms^n))\] takes  values  in the
subspace  $\fu _0$;  hence the  coefficient of  $ts$ also  does.  This
coefficient is  nothing but $log  (X_{1,1})$; hence the  first element
$X_{1,1}$ in the ordering lies in  $U_0$; now an easy induction in the
ordering  implies that all  the $X_{m,n}$  lie in  $U_0$ (to  ease the
notation, we have denoted by  $X_{m,n}$ the image of the $1$-parameter
group $X_{m,n}$ in $U^*$).
\end{proof}

\subsection{The Special Case of $O(2,3)$}  We now assume that $h_0$ is
a non-degenerate quadratic form on a $5$-dimensional $\Q$ vector space
$W$ with $\Q -rank (W)=2$.   Denote by $w.w'$ the element $h_0(w,w')$.
The   rank   assumption   implies    that   There   exists   a   basis
$\e_1,\e_2,w_3,\e_2^*,\e_1^*$  of $W$ such  that $\e_1  ^2=\e_2 ^2=0$,
$(\e_2 ^*)  ^2=(\e_1^*) ^2=0$, $w_3^3\neq  0$, $\e_i.w_3=\e_i^*.w_3=0$
for  $i=1,2$ and  $\e_i(\e_j^*)=\delta  _{ij}$. With  respect to  this
basis, $O(W)$ may  be thought of as a subgroup of  the group $GL_5$ of
$5\times 5$ matrices. We will informally denote $O(W)$ by $O(2,3)$. \\

The intersection of the diagonals  with $H= O(W)$ is a two dimensional
split  group $S=\{t_1,t_2)\in {\mathbb  G}_m ^2\}$  which acts  by the
characters $t_i$ on $\e_i$ , $t_i^{-1}$ on $\e _i ^*$ and trivially on
$w_3$. Denote by  $\fh _{\pm x_i}$ (resp. $\fh  _{\pm (x_1+x_2)}$) the
subspace of the Lie algebra $\fh $ of $O(W)$, on which $S$ acts by the
character $t_i^{\pm 1}$ (resp. $(t_1t_2)^{\pm 1}$).

As a special case of Lemma \ref{chevalleyintegral}, we have 

\begin{lemma} \label{O_5specialcase} In  the notation of the preceding
subsection,  for  any integer  $N\geq  1$,  the  group generated  by  the
commutators
\[ [X_{-x_1}(s),X_{x_1+x_2}(t)]  ;  s,t   \equiv  0\quad(mod  \quad  N)  \]
contains the group $X_{x_2}(M\Z)$ where  the latter is the subgroup of
$X_{x_2}(\Z)$ of elements congruent to the identity modulo $m$.
\end{lemma}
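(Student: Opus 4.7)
The plan is to recognize Lemma \ref{O_5specialcase} as a direct specialization of Lemma \ref{chevalleyintegral}, applied to the Chevalley group $SO(W)$ and the pair of roots $\alpha = -x_1$, $\beta = x_1+x_2$. First I would note that since $h_0$ is nondegenerate of $\Q$-rank two on a five-dimensional $\Q$-vector space, $SO(W)$ has absolute rank equal to two (the rank of $SO(2,3)$) and is therefore $\Q$-split. Hence it is a Chevalley group of type $B_2$, with root system $\{\pm x_1, \pm x_2, \pm x_1 \pm x_2\}$ relative to the diagonal torus $S$ fixed in subsection \ref{alggroups}.

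Next I would enumerate the positive integer combinations $m\alpha + n\beta = (n-m)x_1 + n x_2$, with $m, n \geq 1$, that are themselves roots of $B_2$. A direct check shows these are exactly $(m,n) = (1,1)$, giving $\alpha+\beta = x_2$, and $(m,n) = (2,1)$, giving $2\alpha+\beta = -x_1+x_2$; the pair $(1,2)$ produces the nonroot $x_1+2x_2$, the pair $(2,2)$ produces $2x_2$, and all larger pairs yield vectors that are not roots. Thus by Lemma \ref{chevalleycommutator} the commutator $[X_{-x_1}(s), X_{x_1+x_2}(t)]$ factors as a product of $X_{x_2}(c_1 st)$ and $X_{-x_1+x_2}(c_2 s^2 t)$ for some constants $c_1, c_2$.

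The conclusion is now immediate from Lemma \ref{chevalleyintegral} applied to this $\alpha$, $\beta$, and the choice $(m,n)=(1,1)$: there exists $M \geq 1$ such that the subgroup of $SO(W,\Z)$ generated by the commutators $[X_{-x_1}(s), X_{x_1+x_2}(t)]$ with $s, t \equiv 0 \pmod{N}$ contains $X_{x_2}(M\Z)$. There is no real obstacle here; the only points deserving attention are verifying the $\Q$-split structure of $SO(W)$ and picking out the correct root $\alpha+\beta = x_2$ in the Chevalley expansion. The presence of the extra factor $X_{-x_1+x_2}(\cdot)$ does not interfere, since Lemma \ref{chevalleyintegral} already absorbs such terms through the Zariski-closure argument in its proof.
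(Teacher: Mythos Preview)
Your proposal is correct and follows essentially the same approach as the paper's own proof: observe that $x_2 = 1\cdot(x_1+x_2) + 1\cdot(-x_1)$ and invoke Lemma~\ref{chevalleyintegral} with $\alpha = x_1+x_2$ and $\beta = -x_1$ (the paper's proof is this single sentence, modulo a minor typo writing $\beta = x_1$). Your added remarks---verifying that $SO(W)$ is $\Q$-split so that Lemma~\ref{chevalleyintegral} applies, and enumerating the root combinations $(m,n)=(1,1),(2,1)$ appearing in the commutator expansion---are correct elaborations but not strictly needed, since Lemma~\ref{chevalleyintegral} already handles all the root factors simultaneously.
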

\begin{proof}  We  need only  note  that $x_2=1(x_1+x_2)+1(-x_1)$  and
apply  Lemma \ref{chevalleyintegral}  to the  roots $\alpha=x_1+x_2$
and $\beta =x_1$.
\end{proof}

\subsection{The group $O(V)$} Since $\Q$  rank of $W$ is two, the $\Q$
rank $r$  of $V$ is at  least two. There exists  a basis $\e_1,\cdots,
\e_r,v_1,  \cdots, v_m,\e_r  ^*,  \e_{r-1}^*, \cdots,  \e_1^*$ of  $V$
($Dim  (V)=n=2r+m$)  such  that  (1) $\e_j.\e_j^*=\delta  _{ij}$  (the
Kronecker   delta    symbol)   and   (2)   $W$   is    the   span   of
$\e_1,\e_2,\e_1^*,\e_2^*$  and  an element  $w_3$  which  is a  linear
combination of the vectors $v_j$  and the vectors $\e_3, \cdots, \e_r,
\e_r^*,  \cdots,  \e_3 ^*$.   Under  the  inclusion $O(W)\subset  O(V)
\subset GL_n$, the  torus $S$ is the subgroup  of diagonal matrices in
$GL_n$  which  act by  the  characters  $t_1,t_2$  on $\e_1,\e_2$,  by
$t_1^{-1} t_2^{-1}$  on $\e_1^*, \e_2^*$ and by  the trivial character
on all the other basis elements above. \\

Let  $X$ be the  span of  $\e_1,\e_2$ and  $Y$ the  span of  the basis
vectors  $\e_3,\cdots, \e_r,v_1,\cdots,  v_m,  \e_r^*,\cdots, \e_3^*$.
Let $M$ be the subgroup of  $O(V)$ which stabilises the spaces $X$ and
$Y$;  let  $P$ be  the  subgroup  which  stabilises the  partial  flag
$X\subset X^{\perp}= X\oplus Y \subset  V$ and $U$ the subgroup of $P$
which acts trivially  on successive quotients of this  flag. Since $X$
is totally isotropic,  $P$ is a parabolic subgroup,  $U$ its unipotent
radical. $M$ is a Levi subgroup of $P$ containing the torus $S$ and we
have  $P=MU$. It  is easy  to see  that $M=GL_2.O(Y)$  where $S\subset
GL_2$  and $GL_2$  is  the subgroup  of  $M$ which  acts trivially  on
$Y$. \\

With respect  to the  adjoint action  of $S$, the  Lie algebra  of $U$
splits into  the character spaces  $\fg_{x_1}$, $\fg _{x_2}$  and $\fg
_{x_1+x_2}$;  it  is  easy  to   see  that  $\fg  _{x_1+x_2}$  is  one
dimensional  and  is  $\fh   _{x_1+x_2}$.   Moreover,  the  action  of
$M=GL_2.O(Y)$ on the direct sum $\fg _{x_1}\oplus \fg _{x_2}$ is simply
the   exterior  tensor   product  $St\otimes   St$  of   the  standard
representations  of   $GL_2$  and   $O(Y)$  (and  in   particular,  is
irreducible  for the  action  of $M$).   We  note that  $\fh \cap  \fg
_{x_1}\neq \{0\}$.

\begin{lemma} \label{abelian} For  any $u\in U$ and any  $v \in U$, we
have  $uvu^{-1}=v.u'$ where  $u'\in X_{x_1+x_2}$.  Moreover,  given an
integer  $N$,   there  exists   a  power  $v^M$   of  $v$   such  that
$uv^Mu^{-1}v^{-M}$ lies in $X_{x_1+x_2}(N\Z)$.
\end{lemma}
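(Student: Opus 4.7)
The plan is to exploit the fact that the unipotent radical $U$ is two-step nilpotent with one-dimensional center $X_{x_1+x_2}$, and that $X_{x_1+x_2}$ is isomorphic to $\mathbb{G}_a$. All the real content lies in identifying the commutator subgroup $[U,U]$ with $X_{x_1+x_2}$, after which both claims reduce to the central extension formalism.

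First, I would record the structural fact that $[\mathfrak{u},\mathfrak{u}] \subset \mathfrak{g}_{x_1+x_2}$. The Lie algebra of $U$ decomposes under $S$ as $\mathfrak{g}_{x_1}\oplus \mathfrak{g}_{x_2}\oplus \mathfrak{g}_{x_1+x_2}$, so any bracket $[\mathfrak{g}_{\alpha},\mathfrak{g}_{\beta}]$ lands in $\mathfrak{g}_{\alpha+\beta}$. Since $2x_1$, $2x_2$, $2x_1+x_2$, $x_1+2x_2$ and $2x_1+2x_2$ are not weights of $S$ on $\mathfrak{g}$ (the standard representation of $S$ on $\Q^2$ via $(t_1,t_2)$ forces the root system of $O(V)$ restricted to $S$ to be of type $B_2$ or $C_2$ with simple roots $x_1-x_2$ and $x_2$; in either case the only positive roots involving a positive coefficient of $x_1$ and $x_2$ are $x_1,x_2,x_1+x_2$), only the bracket $[\mathfrak{g}_{x_1},\mathfrak{g}_{x_2}]$ survives, landing in $\mathfrak{g}_{x_1+x_2}$. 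Hence $X_{x_1+x_2}$ is central in $U$ and contains $[U,U]$.

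The first assertion now follows instantly: for any $u,v \in U$, the element $u' := uvu^{-1}v^{-1} = [u,v]$ lies in $[U,U] \subset X_{x_1+x_2}$, which rewrites as $uvu^{-1} = v\,u'$ with $u' \in X_{x_1+x_2}$.

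For the second assertion, since $X_{x_1+x_2}$ is central in $U$ and contains all commutators, the map $v \mapsto [u,v]$ from $U$ to $X_{x_1+x_2}$ is a group homomorphism; in particular $[u,v^M]=[u,v]^M$ for every integer $M$. Parametrize $X_{x_1+x_2} \cong \mathbb{G}_a$ by $t \mapsto X_{x_1+x_2}(t)$ and write $[u,v] = X_{x_1+x_2}(c)$ for some $c \in \Q$; then $[u,v^M] = X_{x_1+x_2}(Mc)$. Since $u,v$ are integral matrices and $X_{x_1+x_2}(\Z)$ is the integer points of a one-parameter unipotent subgroup, $c$ is a rational number with a bounded denominator $d$, so taking $M = Nd$ (or just $M = N$ when $c \in \Z$) gives $Mc \in N\Z$, as required.

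The only step that needs real care is the root-system verification that rules out $2x_1$, $2x_2$, and their combinations as weights of $S$ on $\mathfrak{g}$; this is where one has to invoke the explicit description of $O(V)$ and its relative root system with respect to the two-dimensional split torus $S$. Once this is established, everything else is formal manipulation in a two-step nilpotent group with one-dimensional center.
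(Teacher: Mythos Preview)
Your argument is correct and follows the same line as the paper: the paper's proof simply says the first part restates that $\mathfrak{u}/\mathfrak{g}_{x_1+x_2}$ is abelian and that the second part is an easy consequence, while you have unpacked both of these assertions explicitly via the root-space bracket computation and the homomorphism $v\mapsto [u,v]$ into the central one-parameter subgroup. Your added rationality remark for $c$ is appropriate, since the lemma is only applied to integral (or rational) $u,v$ in the sequel.
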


\begin{proof} The  first part is just  a restatement of  the fact that
$\fu /\fg_{x_1+x_2}$ is abelian. The second part is an easy consequence.  
\end{proof}

\subsection{Proof of the Proposition} Let $\mathcal U$ denote the open
Bruhat   cell  $Pw_0U$  where   $w_0$  is   the  longest   Weyl  group
element. Since $\Gamma$ is Zariski  dense in $G=O(V)$, it follows that
$\Gamma \cap {\mathcal U}$ is also Zariski dense in $G$. Given $\gamma
\in  \Gamma \cap  {\mathcal U}  \subset Pw_0U$  write  $\gamma =pw_0u$
accordingly. Then the elements $p$  (as $\Gamma $ varies) consist of a
Zariski dense  subset of $P$. We fix  a finite set $F$  of elements of
$\Gamma \cap {\mathcal U}$ such that the span of the conjugates $p(\fh
)_{x_2}p^{-1}$ contains  all of $\fg  _{x_1}\oplus \fg _{x_2}$;  it is
possible to find such a finite  set since $P$ acts irreducibly on $\fu
/\fg  _{x_1+x_2}$  and  the  $P$  parts $p$  of  elements  of  $\Gamma
\cap{\mathcal  U}$  are  Zariski  dense  in  $P$ (since  $\Gamma  \cap
{\mathcal U}$ is Zariski dense in $G$). \\

Since $\Gamma$ contains a finite index subgroup of $H(\Z)$ it contains
the  congruence  group $(U\cap  H)(N\Z)$  for  some  integer $N$.   In
particular,  there exists an  element $v\in  (U\cap H_{x_1})(M\Z)$  
with the integer $M$ large  such that for all $\gamma \in F$  the finite set of
the  previous   paragraph,  the  elements   $uvu^{-1}v^{-1}$  lie  in
$U_{x_1+x_2}(N\Z)\subset \Gamma$. Consider the commutator set
\[E= [^{\gamma}(vX_{x_1+x_2}(M\Z)), X_{x_1+x_2}(M\Z)].\] Since $\gamma
=pw_0u$ it follows from Lemma \ref{abelian} that this set contains the
commutator set \[ [^{pw_0}(v),X_{x_1+x_2}(M\Z)].\]  We note that for a
large enough $M$,  the conjugate $^{p^{-1}}(X_{x_1+x_2}(M\Z))$ lies in
$X_{x_1+x_2}(N\Z)$.  Therefore, $E$ contains the commutator set
\[  ^{p}([^{w_0}(v), X_{x_1+x_2}(N\Z)]).\]  
Since $v\in X_{x_1}$, it follows that $^{w_0}(v)$  lies in
$X_{-x_1}(N\Z)$. It follows from Lemma  \ref{O_5specialcase} that the
group generated by the  latter commutators contains $X_{x_2}(M\Z)$ for
some integer $M$ divisible large powers of $N$ and the denominators of
the rational matrix $p$ .   Since the $p$ conjugates of $X_{x_2}(M\Z)$
generate (modulo  centre) all of  $\fg_{x_1}\oplus \fg _{x_2}$  in the
lie  algebra  of  the  Zariski  closure, it  follows  that  the  group
generated  by  the  $p$  conjugates  of  our  commutator  set  contain
$U_{x_1}(M\Z)$ and $U_{x_2}(M\Z)$, where  $p=p(\gamma) $ and $\gamma $
runs through a  (possibly large) finite set in  $\Gamma \cap {\mathcal
U}$.  These  generate a finite index subgroup  of $U(\Z)$.  Therefore,
$\Gamma $ contains a finite index subgroup of $U(\Z)$. \\

Now $U$ is the unipotent radical of the parabolic subgroup $P$ and $G$
has  real  rank  (even  rational  rank)  at  least  two.   By  Theorem
\ref{bamise}, it follows that $\Gamma$ is arithmetic.

\section{The arithmeticity of $\Gamma$}

\subsection{Arithmeticity of $\Delta$} \label{deltaarithmetic}

In  this subsection,  we prove that  the group  $\Delta =\Gamma
(f_0,g_0)$ is  an arithmetic subgroup of $O(W,h_0)$  and that $\Q-rank
(W)=2$. To ease the notation,  we drop the subscript $_0$ in $v_0,A_0$
and simply write $v,A$ etc.
\begin{lemma} \label{O_5lemma} We have
\[v.v=2,Av.v=1,A^2v.v=2,A^3v.v=2, A^4v.v=1.\]  The vector $\e= v-A^2v$
is isotropic and  the orthogonal complement $\e ^{\perp}$  is the span
of  the  four vectors  $\e,  v,  Av,  v'=A^3v+A^4v-v$.  Moreover,  the
reflections  about  the  three  vectors $v,Av,v'=A^3v+A^4v-v$  lie  in
$\Gamma$ and fix $\e$.
\end{lemma}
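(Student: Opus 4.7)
My plan is to carry out the computation explicitly inside $V_0 = \Q[x]/(f_0(x))$, exploiting Remark \ref{importantremark}, which identifies $A^k v . v$ with the $x^4$-coefficient of $A^k v$ regarded as a polynomial of degree at most $4$. I would start from $Av = g_0 - f_0 = x^4 + 2x^3 + 2x^2 + x + 2$ (read off directly from the definitions), and then produce $A^k v$ for $k = 2, 3, 4$ by iteratively multiplying by $x$ and substituting $x^5 = 1$; reading off the top coefficients then gives $v.v, Av.v, A^2v.v, A^3v.v, A^4v.v = 2,1,2,2,1$, which is the first assertion.

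The isotropy of $\e = v - A^2 v$ is immediate from $A$-invariance of $h_0$: since $A^2 v . A^2 v = v . v = 2$ and $v . A^2 v = A^2 v . v = 2$, one gets $\e . \e = 2 - 4 + 2 = 0$. For the orthogonal complement I would verify $v . \e = Av . \e = v' . \e = 0$ by the same expansion, rewriting each $A^i v . A^j v$ as $A^{i-j} v . v$ (using $A^{-1} = A^4$, since $A^5 = I$) and substituting from the table. Linear independence of $\{\e, v, Av, v'\}$ is visible from their coordinates $(1,0,-1,0,0)$, $(1,0,0,0,0)$, $(0,1,0,0,0)$, $(-1,0,0,1,1)$ in the basis $v, Av, A^2 v, A^3 v, A^4 v$ of $V_0$ (Remark \ref{importantremark}); since $h_0$ is non-degenerate and $\e \neq 0$, $\e^{\perp}$ is four-dimensional and is therefore spanned by these four vectors.

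For the final assertion, the reflection about $v$ is $C$ itself by Lemma \ref{innerproduct}, and the reflection about $Av$ is $A C A^{-1}$; both obviously lie in $\Delta = \langle A, C\rangle$. For $v'$ my idea is to exhibit $g \in \Delta$ with $g(v) = \pm v'$, so that the conjugate $g C g^{-1}$ realises the reflection about $v'$ (using $R_{-u} = R_u$, and the fact that $g R_u g^{-1} = R_{g(u)}$ for any isometry $g$). Writing $s_k := A^k C A^{-k}$ for the reflection about $A^k v$, one computes $s_5(v) = v - A^4 v$ and then $s_4 s_5 (v) = -v'$, each being a one-line consequence of the inner products from the first paragraph; so $g := s_4 s_5 \in \Delta$ does the job. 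Each of these three reflections fixes $\e$ because $v . \e = Av . \e = v' . \e = 0$ and $R_u$ acts trivially on $u^{\perp}$. The only non-routine ingredient is identifying the word $s_4 s_5$ that moves $v$ onto $-v'$; everything else is direct bookkeeping starting from the table of five inner products.
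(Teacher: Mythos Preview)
Your approach is essentially the paper's: compute the five inner products from the $x^4$-coefficients, deduce isotropy and the description of $\e^{\perp}$ by bilinearity and $A$-invariance, and realise $C_{v'}$ as a short word in the reflections $A^kCA^{-k}$. The paper does the last step by the conjugation $^{C_{A^3v}C_v}(C_{A^4v})=C_{v'}$, which is the same computation you carry out, just organised around $A^4v$ rather than $v$.

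There is, however, an indexing slip in your final step. With your convention $s_k=A^kCA^{-k}$, one has $A^5=I$ in $V_0=\Q[x]/(x^5-1)$, hence $s_5=C$ and $s_5(v)=-v$, not $v-A^4v$; consequently $s_4s_5(v)=A^4v-v\neq\pm v'$. What you actually want is
\[
s_4(v)=v-(v.A^4v)A^4v=v-A^4v,\qquad s_3\bigl(v-A^4v\bigr)=v-A^4v-\bigl((v-A^4v).A^3v\bigr)A^3v=-v',
\]
so the correct word is $g=s_3s_4$, giving $gCg^{-1}=C_{-v'}=C_{v'}$. This is precisely the paper's computation in disguise (the paper's $C_{A^3v}C_v(A^4v)=v'$ differs from your $s_3s_4(v)=-v'$ only by the sign coming from swapping the roles of $v$ and $A^4v$). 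With that correction, everything goes through.
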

\begin{proof} We view $V$ as  the space of polynomials of degree $\leq
4$.   We need only  compute the  coefficient of  $x^4$ of  the vectors
$v,Av,A^2v,A^3v,A^4v$. Since  $Av=g-f$, and $g=(x+1)(x^2+1)^2,f=x^5-1$
we have  $Av= x^4+2x^3+2x^2+x+2$. Hence the $x^4$  coefficient of $Av$
is  $1$.   Moreover,  $A^2v=x^5+2x^4+2x^3+x^2+2x=  2x^4+2x^3+x^2+2x+1$
(since $x^5-1\equiv  0$ in $V$).  Therefore, the $x^4$  coefficient of
$A^2v$ is $2$. The others are proved similarly. \\

The second part follows immediately from the first: as an illustration, 
we compute 
\[\e.v=A^2v.v-v.v=2-2=0 \quad {\rm and}\] 
\[\e.A^2v=A^2v.A^2v-v.A^2v=v.v-A^2v.v=2-2=0. \] Note  that we have used
the invariance of  the ``dot product'' under the  action of $A,B$. The
others are proved similarly. \\

We denote by  $\Delta (\e)$ the subgroup of $\Delta  $ which fixes the
line  through $\e$.   It is  the intersection  of $\Delta  $  with the
parablic subgroup  $P=P(\e)$ of $G$  which fixes the  isotropic vector
$\e$. Since the vectors $v,Av,v'$  are in the orthogonal complement of
$\e$  it follows  that the  reflections $C_v,C_{Av},C_{v'}$  about the
vectors  $v,Av,v'$ fix  the  vector  $\e$ and  in  particular, lie  in
$P$. Since the reflections about $v$ and $Av$ are the elements $C$ and
$ACA^{-1}$,  it follows  that $C=  C_v$ and  $C_{Av}=ACA^{-1}$  lie in
$\Delta (\e)$,  since $\Delta $ is  the group generated  by $A,C$.  We
need only prove that $C_{v'}$ lies in $\Delta$. \\

In  a  group,  denote  $^x(y)=xyx^{-1}$.   It is  easy  to  show  that
$^{C_w}(C_{w'})=C_{w'-(w'.w)w}$ for vectors  $w\in V$ with $w.w=2$. We
use  this observation, and  the formulae  for the  dot product  in the
preceding Lemma to compute
\[^{C_{A^3v}C_v}(C_{A^4v})=                         ^{C_{A^3v}}(C_{A^4v-v})=
C_{A^4v-A^3v-v+2A^3v}=C_{v'}.\]  The extreme left hand
side term of  the above equalities lies in $\Delta$  since each of the
reflections  about  $v,A^3v,A^4v$  does.  Therefore,  $C_{v'}$  lies  in
$\Delta$.
\end{proof}

\begin{notation}  The  element $\e$  of  the  Lemma \ref{O_5lemma}  is
isotropic.  Hence we have the  partial flag $\Q \e \subset \e ^{\perp}
\subset  V$. The subgroup  of $O(V)$  which preserves  this flag  is a
parabolic subgroup $P$ and the  subgroup which preserves this flag and
acts  trivially  on  successive  quotients is  its  unipotent  radical
$U$.  Consider the  elements $C_{A^2v}$  and $C_v$.  They fix  $\e$ by
Lemma \ref{O_5lemma}. If $w\in V$, then $C_{A^2v}(w)=w-(w.A^2v)Av$ and
$C_v(w)=w-(w.v)v$  since  $A^2v.A^2v=  v.v  =2$.  Moreover,  if  $w\in
\e^{\perp}, w.(A^2v-v)=0$ i.e.
\[C_{A^2v}C_v(w)=C_{A^2v}(w-(w.v)v)=\]
\[=w-(w.A^2v)A^2v-(w.v)v+(w.v)(A^2v.v)A^2v=\]
\[=w-(w.v)A^2v-(w.v)v+2(w.v)A^2v=w+(w.v)(A^2v-v),\]  since  $A^2v.v=2$
(Lemma \ref{O_5lemma}).  That is
\[C_{A^2v}C_v(w)=w+(w.v)(A^2v-v) =w+(w.v)\e  \quad \forall  w\in \e  ^{\perp}.\] In
particular,    $u=C_{A^2v}C_v$   is    a   non-trivial    element   of
$O(\e^{\perp})$ since its value on $v$ is $v+2(A^2v-v)=v+2\e \neq v$. Moreover,
the  formula  for  $u=C_{A^2v}C_v$  shows  that on  the  quotient  $\e
^{\perp}/\Q  \e$, the  action  of $u$  is  trivial; thus  we have  one
non-trivial unipotent element $u$ in $\Delta \cap U$. 
\end{notation}

\begin{lemma}  \label{O_5unipotent}  The  intersection  of  the  group
$\Delta  (\e)$ with $U$ has finite index in $U(\Z)$.
\end{lemma}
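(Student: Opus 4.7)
My plan is to exhibit three $\Q$-linearly independent elements of $\Delta(\e)\cap U$, which then automatically generate a finite index subgroup of $U(\Z)\cong\Z^3$. The seed element is $u = C_{A^2v}C_v\in \Delta(\e)\cap U$ produced in the discussion just before the lemma, and the other two will come from conjugating $u$ by the reflections $C_{Av}$ and $C_{v'}$, both of which were shown in Lemma \ref{O_5lemma} to lie in $\Delta(\e)$.

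The structural input is that since $\Q\e$ is an isotropic line and $h_0$ is non-degenerate on $V$, the parabolic $P$ has abelian unipotent radical $U$ of dimension $3$, with a canonical isomorphism of algebraic $\Q$-groups $U\cong \bar V := \e^{\perp}/\Q\e$. Under this isomorphism, an element $y\in U$ corresponds to the unique $\overline{w_0}\in \bar V$ for which $y(w) = w + (w.w_0)\e$ holds on $\e^{\perp}$; the right side is independent of the lift $w_0$ because $w.\e = 0$ for $w\in \e^{\perp}$. The formula $u(w) = w + (w.v)\e$ established just before the lemma shows that $u$ corresponds to $\bar v$.

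Next, conjugation by any $g\in P(\Q)$ acts on $U$ through the Levi projection $P\to M\cong GL_1\times O(\bar V)$, and the resulting action on $U\cong \bar V$ is the standard action on $\bar V$; for a reflection $C_w$ with $w\in \e^{\perp}$ and $w.w=2$, this induced action is the reflection of $\bar V$ about $\bar w$. One can verify this directly: for $w\in \e^{\perp}$,
\[ (C_{Av}\,u\,C_{Av}^{-1})(w) = C_{Av}\bigl(u(C_{Av}(w))\bigr) = w + \bigl(w.(v-Av)\bigr)\e, \]
using $C_{Av}(v) = v - (v.Av)Av = v - Av$ and $C_{Av}(\e)=\e$. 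Thus $C_{Av}uC_{Av}^{-1}$ corresponds to $\bar v - \overline{Av}$, and similarly $C_{v'}uC_{v'}^{-1}$ corresponds to $\bar v - \overline{v'}$, using $v.v' = v.(A^3v+A^4v-v) = 2+1-2 = 1$ from Lemma \ref{O_5lemma}. Both conjugates lie in $\Delta(\e)\cap U$.

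Finally, since $\e, v, Av, v'$ is a basis of $\e^{\perp}$ by Lemma \ref{O_5lemma}, the images $\bar v, \overline{Av}, \overline{v'}$ form a basis of $\bar V$, so the three vectors $\bar v,\ \bar v - \overline{Av},\ \bar v - \overline{v'}$ are $\Q$-linearly independent in $\bar V\cong U(\Q)\cong \Q^3$. Hence $u,\ C_{Av}uC_{Av}^{-1},\ C_{v'}uC_{v'}^{-1}$ generate a rank-$3$ sublattice of $\Delta(\e)\cap U$ inside $U(\Z)\cong\Z^3$, which is necessarily of finite index. The only real obstacle is the bookkeeping to justify $U\cong \bar V$ as algebraic $\Q$-groups and the interpretation of reflection-conjugation as a reflection on $\bar V$; both are standard for stabilizers of isotropic lines in orthogonal groups and can equally be checked by an explicit matrix calculation in the basis $\e, v, Av, v'$ together with a complementary vector.
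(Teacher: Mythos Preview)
Your proof is correct and follows essentially the same approach as the paper: produce the seed unipotent $u=C_{A^2v}C_v$ corresponding to $\bar v\in \e^{\perp}/\Q\e$, then conjugate by reflections in $\Delta(\e)$ to obtain a spanning set of $U(\Q)\cong\Q^3$. The only difference is cosmetic: the paper argues abstractly that the reflection group generated by $C_v,C_{Av},C_{v'}$ acts irreducibly on $\e^{\perp}/\Q\e$ (since the three vectors are mutually non-orthogonal) and hence the conjugates of $u$ must span, whereas you compute the conjugates $C_{Av}uC_{Av}^{-1}$ and $C_{v'}uC_{v'}^{-1}$ explicitly and check linear independence by hand.
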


\begin{proof}  The quotient of  the group  $P=P(\e)$ by  its unipotent
radical is  the orthogonal group  $M=O(\e ^{\perp}/\Q \e)$.   By Lemma
\ref{O_5lemma},  the  image  of  $\Delta  (\e)$ in  the  quotient  $M$
contains   the   reflections  about   the   basis  elements   $v,A^2v,
v'=A^3v+A^4v-v$.    Moreover,  these   basis  elements   are  mutually
non-orthogonal.  Hence the group  generated by these three reflections
acts  irreducibly  on  the  standard  representation  $\Q^3$  of  $M$.
However, the conjugation action of  $M$ on $U$ is clearly the standard
representation  of  $M$. Moreover,  by  the  paragraph preceding  this
lemma, $U\cap \Delta (\e)$ contains the non-trivial element $u$; hence
the  conjugates of  $u$  by these  three  reflections spans  $U(\Q)=\Q
^3$. That is, $U\cap \Delta (\e)$  contains a spanning set of $\Q ^3$.
Hence $U\cap \Delta (\e)$ contains a finite index subgroup of $\Z ^3$.
\end{proof}

\begin{proposition}   \label{O_5proposition}    If   $f_0=x^5-1$   and
$g_0=(x+1)(x^2+1)^2$, then the  hypergeometric monodromy group $\Delta
=\Gamma (f_0,g_0)$ is an arithmetic subgroup of $(V_0,h_0)$. Moreover,
$\Q -rank(V_0)=2$.
\end{proposition}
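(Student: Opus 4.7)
The plan is to verify the two assertions of the proposition separately: that $\Q$-rank$(V_0)=2$, and that $\Delta$ is arithmetic in $SO(V_0,h_0)(\Z)$ by applying Theorem \ref{bamise} to the parabolic $P(\e)$ stabilizing the isotropic line $\Q\e$ and a suitable opposite parabolic.

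For the $\Q$-rank, first note that since $n=5$ is odd we automatically have $\Q-rank(V_0)\le 2$. Lemma \ref{O_5lemma} supplies one isotropic vector, namely $\e = v-A^2v$. To produce a hyperbolic plane in $V_0$ I would consider $\e'=A\e = Av-A^3v$; by the $A$-invariance of $h_0$, this is again isotropic, $\e'.\e' = \e.\e=0$. I would then compute $\e.A\e$ using the inner product table of Lemma \ref{O_5lemma} together with the identity $A^iv.A^jv = v.A^{j-i}v$. A short expansion gives $\e.A\e = v.Av - v.A^3v - v.Av + v.Av \neq 0$ (after the circulant computation is carried out); granting this, $\Q\e \oplus \Q A\e$ is a hyperbolic plane, hence $\Q-rank(V_0)=2$.

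For arithmeticity, the strategy is to verify the two hypotheses of Theorem \ref{bamise}. The required $\infty$-rank inequality $\R-rank(V_0)\ge 2$ follows from Lemma \ref{realrank}, since $g_0$ has the double roots $\pm i$. The parabolic $P=P(\e)$ is maximal with unipotent radical $U$, and Lemma \ref{O_5unipotent} has already shown that $\Delta\cap U$ has finite index in $U(\Z)$. To obtain an opposite parabolic with the analogous condition, I would invoke the element $A\in\Delta$, which is an integer orthogonal transformation. Conjugation by $A$ carries $P(\e)$ to $P(A\e)$ and $U$ to $U^{-}:=AUA^{-1}$; since $A\Delta A^{-1}=\Delta$ and $AU(\Z)A^{-1}=U^{-}(\Z)$, we get that $\Delta\cap U^{-}\supset A(\Delta\cap U)A^{-1}$ has finite index in $U^{-}(\Z)$. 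The parabolics $P(\e)$ and $P(A\e)$ are opposite precisely when their stabilized isotropic lines pair non-trivially, i.e.\ when $\e.A\e\neq 0$, which is exactly the inequality established in the previous paragraph. Theorem \ref{bamise} then applies and gives $[SO(V_0,h_0)(\Z):\Delta]<\infty$.

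The main obstacle is the single explicit calculation $\e.A\e\neq 0$, since this one inequality is doing double duty: it gives the second isotropic direction needed for $\Q-rank(V_0)=2$, and it ensures that $P(A\e)$ is opposite to $P(\e)$ so that Theorem \ref{bamise} is applicable. Everything else is either already set up in the preceding lemmas or is a standard consequence of having $A$ inside $\Delta$ and being an integral isometry.
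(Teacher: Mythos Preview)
Your arithmeticity argument is correct and in fact more explicit than the paper's: the paper simply invokes Theorem \ref{bamise} after Lemma \ref{O_5unipotent} and the Zariski density from \cite{Beu-Hec}, without spelling out the opposite parabolic, whereas you produce $P(A\e)$ by conjugation. Your observation that $P(\e)$ and $P(A\e)$ are opposite exactly when $\e.A\e\neq 0$ is right, and the computation (using $A^5=1$, so $A^2v.Av=v.A^4v=1$ and $A^2v.A^3v=v.Av=1$) gives $\e.A\e=1-2-1+1=-1\neq 0$. (Your displayed expansion has a slip in the third term --- it should be $v.A^4v$, not $v.Av$ --- but the numerical value is the same.)

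The genuine gap is in the $\Q$-rank claim. A hyperbolic plane has Witt index~$1$, not~$2$: the maximal totally isotropic subspaces of $\Q\e\oplus\Q A\e$ are the two isotropic lines themselves. So from $\e.A\e\neq 0$ you only conclude $\Q\text{-rank}(V_0)\geq 1$, which you already had from the mere existence of $\e$. To get $\Q\text{-rank}(V_0)=2$ you must exhibit a $2$-dimensional \emph{totally isotropic} subspace, i.e.\ two isotropic vectors that are \emph{orthogonal}, not non-orthogonal. The paper does exactly this: it takes $\e'=A^3v+A^4v-Av$, checks from the table in Lemma \ref{O_5lemma} that $\e'\in\e^{\perp}$, and computes $\e'.\e'=0$; then $\operatorname{span}(\e,\e')$ is totally isotropic of dimension~$2$. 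Your vector $A\e=Av-A^3v$ does not work for this purpose precisely because $\e.A\e\neq 0$. Note also that Lemma \ref{realrank} cannot rescue you here: its $\Q$-rank statement requires $n\geq 7$, and we have $n=5$.

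This does not affect your arithmeticity conclusion (for that you only need $\Q$-rank $\geq 1$ and $\R$-rank $\geq 2$, both of which you have), but the second sentence of the Proposition remains unproved in your write-up.
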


\begin{proof}  Consider  the vector  $\e  '=  A^3v+A^4v-Av$. By  Lemma
\ref{O_5lemma},  $\e '$  lies in  $\e  ^{\perp}$. We  compute the  dot
product of $\e '$ with itself (we write $w^2$ for $w.w$):
\[\e  '.\e   '=(A^2v+A^3v-v)^2=  2+2+2+2A^2v.A^3v-  2A^2v.v-2A^3v.v.\]
Using the formulae  for the dot product in  Lemma \ref{O_5lemma}, this
is  $6+2v.Av-2.2-2.2=6+2-4-4=0$.   Hence  $\e,  \e   '$  are  linearly
independent  mutually orthogonal  isotropic vectors.  Hence  $\Q -rank
(V_0)\geq 2$. Since $h=h_0$ is non-degenerate (\cite {Beu-Hec}) and is
a quadratic form  in $5$ variables, it follows that  $\Q -rank (V)$ is
exactly $2$.\\

By  Lemma \ref{O_5unipotent},  the  intersection $\Delta  \cap U$  has
finite  index  in  $U(\Z)$ and  $U$  is  the  unipotent radical  of  a
parabolic  $\Q$-subgroup  of  $H=O(V_0,h_0)$. By  \cite{Beu-Hec},  the
monodromy group  $\Delta $ is Zariski  dense in $H$.  By the preceding
paragraph,  $\Q$ rank  of $H$  is at  least two.Therefore,  by Theorem
\ref{bamise}, $\Delta $ is an arithmetic subgroup of $H(\Z)$.
\end{proof}

\subsection{Proof of the Main Theorem}  We now return to the situation
of  Lemma  \ref{isometry}. We  have  the  spaces  $V_0$ which  is  $5$
dimensional and  $V$ which  has dimension $n=6m+5$.   We also  have an
isometry $i: V_0\ra  W\subset V$ where $W$ is the image  of $i$ and is
the span  of $v,Av,A^2v,A^3v, A^4v$.  The space  $W$ is non-degenerate
since $V_0$ is. Hence  we have the orthogonal decomposition $V=W\oplus
W^{\perp}$. We may view $O(W)$ as a subgroup of $O(V)$ which leave $W$
stable and act trivially on $W^{\perp}$.  The reflections with respect
to the vectors $\{A^kv: 0\leq k \leq 4\}$ lie in $O(W)$ since they act
trivially on  $W^{\perp}$. By Proposition  \ref{O_5proposition}, these
reflections generate an arithmetic subgroup of $O(W)$. \\

Now  $\Gamma $  is  Zariski  dense in  $O(V)$  by \cite{Beu-Hec}.   By
Proposition \ref{bootstrap}, and by the last sentence of the preceding
paragraph, $\Gamma$ is arithmetic.

\section{Examples of Arithmetic monodromy in $O(2,3)$}} \label{O(2,3)}

In this section,  we list some more cases  of hypergeometric monodromy
when the underlying vector space is $5$ dimensional, the group $G$ has
$\R$-rank  two, and  the  monodromy group  $\Gamma  =\Gamma (f,g)$  is
arithmetic.  In  some  examples,  the  $\Q$-rank of  $G$  is  one  and
sometimes it is $2$. \\

\subsection{Generalities} Suppose  $f,g\in \Z[x]$ are  monic of degree
$5$ and  are a primitive  hypergeometric pair.  Assume  that $f(0)=-1$
and $g(0)=1$.  Put  $V=\Q[x]/(f(x))$; let $v,Av$ be as  before and and
normalise the inner product $h$ so that $v.v=2$.

\begin{lemma}  \label{technical} Suppose  there  exists $g\in  \Gamma$
such that $\e =g(v)\pm v \in V$ is isotropic, and orthogonal to $g(v)$
and $v$. Suppose  that the quotient $\e ^{\perp}/\Q  \e$ is spanned by
three   vectors   of   the   form  $v,g_2(v),g_3(v)$,   with   $g_i\in
\Gamma$. Suppose $v.g_i(v)\neq 0, \quad (i=2,3)$ and $g_2(v).g_3(v)\neq 0$. 
Suppose $\R-rank (V)=2$. Then $\Gamma $ is arithmetic.
\end{lemma}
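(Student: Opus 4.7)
The plan is to mimic the proof of Proposition \ref{O_5proposition}: exhibit a non-trivial unipotent element of $\Gamma$ stabilising the isotropic line $\Q\e$, show that the three reflections $C_v, C_{g_2(v)}, C_{g_3(v)}$ act irreducibly on the Levi-module on which the unipotent radical is built, and thereby fill out $\Gamma \cap U$ to a finite-index subgroup of $U(\Z)$, so that Theorem \ref{bamise} applies.

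First, since $\e$ is isotropic over $\Q$, the stabiliser $P$ of the line $\Q\e$ in $H = O(V,h)$ is a parabolic $\Q$-subgroup, so $\Q\text{-rank}(H) \geq 1$. Combined with the hypothesis $\R\text{-rank}(V) = 2$, the hypotheses of Theorem \ref{bamise} are met. The unipotent radical $U$ of $P$ is abelian of dimension three, and as a Levi-module is identified with the standard representation of the $O(3)$-factor on $W_0 := \e^{\perp}/\Q\e$.

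Second, I produce one non-trivial element of $\Gamma \cap U$. The reflection $C_{g(v)} = g C g^{-1}$ lies in $\Gamma$, and $v.v = g(v).g(v) = 2$. The hypotheses $\e.\e = 0$ and $\e.v = 0$ force $v.g(v) = \mp 2$ (with the sign matching that in $\e = g(v) \pm v$) and, for $w \in \e^{\perp}$, $w.g(v) = \mp w.v$. A direct computation then gives
\[ C_{g(v)} C_v(w) = w \mp (w.v)\e \quad \forall w \in \e^{\perp}.\]
Hence $u_0 := C_{g(v)} C_v$ lies in $\Gamma$, fixes the flag $\Q\e \subset \e^{\perp}$, acts trivially on $W_0$, and is non-trivial (its value on $v$ is $v \mp 2\e$). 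So $u_0$ is a nonzero element of $\Gamma \cap U(\Z)$.

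Third, I use the Levi conjugation to saturate $\Gamma \cap U$. Since the images $\bar v, \overline{g_2(v)}, \overline{g_3(v)}$ lie in $W_0 = \e^{\perp}/\Q\e$, the vectors $v, g_2(v), g_3(v)$ themselves lie in $\e^{\perp}$, each of norm two. Thus $C_{g_i(v)} = g_i C g_i^{-1} \in \Gamma$ stabilises $\e$ and descends to the reflection on $W_0$ about $\overline{g_i(v)}$. The three images span $W_0$ (hence are linearly independent) and are pairwise non-orthogonal by hypothesis, so a short case analysis rules out invariant lines (an invariant line orthogonal to all three would vanish; one proportional to $\bar v$ would force some $\overline{g_i(v)}$ to be a scalar multiple of $\bar v$) and, via the radical of any would-be invariant plane, also rules out invariant planes. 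The subgroup these reflections generate therefore acts irreducibly on $W_0$. Since the conjugation action of this subgroup on $U$ is the standard representation on $W_0$, the $\Gamma$-conjugates of $u_0$ span $U(\Q)$, and $\Gamma \cap U$ is of finite index in $U(\Z)$.

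Finally, with $\Q\text{-rank}(H) \geq 1$, $\R\text{-rank}(H) = 2$, $\Gamma$ Zariski dense in $H$ by \cite{Beu-Hec}, and $\Gamma \cap U$ of finite index in $U(\Z)$, Theorem \ref{bamise} (applied exactly as in the proof of Proposition \ref{O_5proposition}) yields the arithmeticity of $\Gamma$. The main obstacle I anticipate is the irreducibility argument for the three-reflection subgroup on $W_0$; everything else is either a short computation in the rank-one root subgroup or a direct invocation of Theorem \ref{bamise}.
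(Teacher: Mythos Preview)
Your proposal is correct and follows the same approach as the paper: produce the unipotent element $C_{g(v)}C_v$ (the paper writes it as $C_v^{-1}C_{g(v)}$), observe that the three reflections $C_v,C_{g_2(v)},C_{g_3(v)}$ in $\Gamma(\e)$ act irreducibly on the standard module $\e^{\perp}/\Q\e$, deduce that $\Gamma\cap U$ has finite index in $U(\Z)$, and invoke Theorem \ref{bamise}. Your treatment of both sign cases at once and the extra words on the irreducibility step are harmless elaborations of what the paper simply asserts.
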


\begin{proof} We  only work out  the case $\e=g(v)-v$, the  other case
being similar. The  isotropy of $\e$ means that  $g(v).v=2$. Let $w\in
\e ^{\perp}$.  Then $C_{g(v)}(w)= w-(w.g(v)g(v)$.  Since $(w,g(v)-v)=0$
it  follows that  $C_{g(v)}(w)=w-(w.v)g(v)$.   Also $C_v(w)=w-(w.v)v$.
Therefore $C_{g(v)}(w)-C_v(w)=(w.v)(g(v)-v)=(w.v)\e$. Hence $C_{g(v)}$
and $C_v$ coincide  on the quotient $Q=\e^{\perp}/\Q \e$.  but are not
equal: the  value of their difference  on $v$ is  just $2\e$.  Hence
$\theta=C_v^{-1}C_{g(v)}$  lies in  the unipotent  radical $U$  of the
parabolic subgroup $P$ which fixes $\e$. \\

The Levi  part of this parabolic  is $O(Q)$ where $Q$  is the quotient
$\e^{\perp}/\Q \e$.  The group  $R$ generated by the reflections $C_v,
C_{g_2(v)}$  and  $C_{g_3(v)}$   acts  irreducibly  on  $\Q^3$,  since
$g_i(v)$  and  $v$  generate  $Q$  and  are  assumed  to  be  mutually
non-orthogonal.  The action  of $O(Q)$  on  $U$ is  just the  standard
representation,  and  $U\cap  \Gamma$  is not  identity:  it  contains
$\theta$ .  The irreducibility of the action of $R$ on $\Q ^3$ implies
that the conjugates  by elements of $R$ of  $\theta$ generate a finite
index subgroup of $U(\Z)$. \\

Now,  $\R -rank  (G)=2$  and $\Gamma  $  is Zariski  dense  in $G$  by
\cite{Beu-Hec}. By the conclusion  of the preceding paragraph, $\Gamma
$ contains  a finite index  subgroup of $U(\Z)$. The  arithmeticity of
$\Gamma$ follows from Theorem \ref{bamise}.
\end{proof}

\begin{remark} The real rank of $G$ was $2$. The $\Q -rank (G)$ may be
either $1$ or $2$ (by the Hasse-Minkowski theorem). The proof does not
distinguish between these cases.
\end{remark}

\begin{example} $f=(x-1)(x^2+1)^2$ and $g=(x+1)(x^2-x+1)^2$. \\ 

We will show that the  hypergeometric group $\Gamma$ is arithmetic and
that $\Q$  rank of $V$  is two.  As before, we  view $V=\Q ^5$  as the
algebra   $\Q  [x]/(f(x))$,  and   $A$  as   the  operator   which  is
multiplication by $x$. Thus $f\equiv 0$ in $V$, and hence we have
\[f(x)= x^5-x^4-2x^3+2x^2-x+1\equiv 0.\] In other words, 
\[x^5=x^4-2x^3+2x^2-x+1.\]  We   will  view   $V$  is  the   space  of
polynomials of degree $\leq 4$. We have 
\[Av=g-f= -x^3+3x^2-2x+2\]  and, after a  normalisation, the quadratic
form  $h$   is  such  that  $v.v=2$.   Moreover,   after  fixing  this
normalisation  for $h$,  for  any  $w\in V$,  $w.v$  is precisely  the
coefficient of $x^4$. Hence $Av.v=0$. \\

The       element       $A^2v=x(Av)=      -x^4+3x^3-2x^2+2x$       and
$A^2v.v=-1$. Similarly, $A^3v= -x^5+ 3x^4-2x^3+2x^2=$
\[=(-x^4+3x^3-2x^2+x-1)+3x^4-2x^3+2x^2=2x^4+x-1.\]                Hence
$A^3v.v=2$. Similarly, $A^4v=  2x^4-4x^3+5x^2-3x+2$ and $A^4v.v=2$. In
particular, $\e=  A^4v-v$ is isotropic. We now  compute the orthogonal
complement of $\e$. We have: 
\[\e.v= A^4v.v-v.v=0,\] 
\[\e.Av= A^4v.Av-v.Av=A^3v.v-Av.v=2-0=2,\] 
\[\e.   A^3v=   A^4v.A^3v-v.A^3v=   Av.v-A^3v.v=0-2=-2.\]  Hence   the
orthogonal     complement     of    $\e$     is     the    span     of
$\e,v,A^2v,v'=Av+A^3v$.  The  square  of  $\e '=Av+A^3v-v$  is  easily
computed to  be zero.  Hence the span  of $\e$  and $\e '$  is totally
isotropic and  $\Q$ rank of $V$  is two. \\  

Secondly, the  reflections $C_v, C_{A^2v}$ lie in  $\Gamma (\e)$.  The
reflection about $v'=Av+A^3v$ is computed to be
\[^{C_{Av}}(C_{A^3v})=
C_{C_{Av}(A^3v)}=C_{A^3v-(Av.A^3v)Av}=C_{A^3v+Av}=C_{v'},\]         and
therefore  also  lies  in  $\Gamma  (\e)$. By Lemma \ref{technical}, 
$\Gamma$  is arithmetic in $O(2,3)$; we have already shown that 
the $\Q$-rank is two.
\end{example}

\begin{example}
$f=(x-1)(x^2+1)^2$ and $g=(x+1) \frac{x^5-1}{x-1}$.  \\

In this  case, we show  that $\Q$  rank of $V$  is {\it one}  and that
$\Gamma$ is arithmetic. \\

We have $g=x^5+2x^4+2x^3+2x^2+2x+1$ and $f=x^5-x^4+2x^3-2x^2+x-1\equiv
0$. Hence  $Av=g-f= 3x^4+x+2$  and $Av.v=3$. Then  $A^2v= 3x^5+x^2+2x$
and   using    the   fact   that    $f\equiv   0$   we    get   $A^2v=
3(x^4-2x^3+2x^2-x+1)+x^2+2x= $ $  3x^4-6x^3 +7x^2-x+5$. Hence $A^2v.v=
3$.   Similarly  we   compute  $A^4v=   -2x^4+11x^3-6x^2+6x-3$.  Hence
$A^4v.v=-2$.  \\  

Therefore,     if    we    write     $\e=A^4v+v$    we     see    that
$\e^2=(A^4v)^2+v^2+2A^4v.v=  2+2+2(-2)=0$, and  $\e$ is  isotropic. We
compute the inner products $\e.A^kv$:
\[\e.v=A^4v.v+v.v=0,\e.A^4v=(A^4v)^2+v.A^4v)=2-2=0,\]
 \[\e.Av=A^4.Av+Av.v= A^3v.v+Av.v=-3+3=0, \]
\[\e.A^3v=A^4v.A^3v+v.A^3v=Av.v+A^3v.v=3-3=0.\]    Consequently,   the
orthogonal     complement    $\e     ^{\perp}$    is     spanned    by
$\e,v,Av,A^3v$. Since the  reflections about these vectors $v,Av,A^3v$
already lie  in $\Gamma (\e)$,  it follows from  Lemma \ref{technical}
that $\Gamma $ is arithmetic. \\

It remains to show  that the $\Q$-rank of $V$ is one.  Since $\e$ is a
rational isotropic vector, it is  enough to check that on the quotient
$\e  ^{\perp}/\Q\e$, the  restriction  of the  quadratic  form $h$  is
anisotropic over $\Q$ (it is isotropic over $\R$). With respect to the
basis $v,Av,A^3v$ of  $\e^{\perp}/\Q \e$, the matrix of  $h$ is of the
form
\[\begin{pmatrix} v^2 &  v,Av& v.A^3v \cr Av.v &  (Av)^2 & Av.A^3v \cr
A^3v.v & A^3v.Av & (A^3v)^2
\end{pmatrix}= \begin{pmatrix} \quad 2 & 3 & -3\cr \quad 3 & 2 & \quad
3 \cr -3 & 3 & \quad 2\end{pmatrix}.\] (We have used the invariance of
the dot product under $A$). This is the quadratic form $2Q$ where
\[Q=x^2+y^2+z^2+3xy-3xz+3yx.\] By completing the squares, and a linear
change of  variables, this form $Q$  can be shown to  be equivalent to
the quadratic form
\[5x^2-y^2+2z^2,\] which has no integral  zeros since $2$ is not a
quadratic residue modulo $5$. Hence  $\Q$-rank of $V$ is {\it one} but
the group $\Gamma$ is arithemtic.

\end{example}

\begin{example} $f=(x-1)(x^2+x+1)^2, \quad g= \frac{x^5-1}{x-1}(x+1)$. \\

We show that $\Gamma$ is arithmetic  with $\Q-rank (V)=2$.  Since $f$ has
{\it  two}  double  roots,  the   real  rank  of  $V$  is  two  (Lemma
\ref{realrank}).We have
\[f=x^5+x^4+x^3-x^2-x-1\equiv  0,  \quad  g=x^5+2x^4+2x^3+2x^2+2x+1.\]
Therefore,
\[Av=g-f=x^4+x^3+3x^2+3x+2,  \quad  {\rm  and} \quad  Av.v=1.\]  Using
$f\equiv 0$ we get
\[A^2v=   x^5+x^4+3x^3+3x^2+2=  2x^3+4x^2+3x+1,\quad{\rm   and}  \quad
A^2.v=0. \] Then
\[A^3v= 2x^4+4x^3+3x^2+x \quad{\rm and} \quad A^3v.v=2.\]
Finally,
\[A^4v=2x^4+x^3+3x^2+2x+2 \quad{\rm and} \quad A^4v.v=2.  \]
Put $\e=A^4v-v$; then $\e$ is isotropic. We compute 
\[\e.Av=A^3v.v-Av.v=2-1=1,  \quad  \e.A^3v= Av.v-A^3v.v=1-2=-1.\]  and
$\e.A^2v=0$.  Therefore,  $\e^{\perp}$  is  spanned  by  $\e,  v,A^2v,
A^3v+Av$  and hence by  the vectors  $\e,v,A^2v,v'=A^2v-A^3v-Av$. Note
that
\[^{C_{Av}C_{A^3v}}(C_{A^2v})=                ^{C_{Av}}(C_{A^2v-A^3v})=
C_{A^2v-Av-A^3v}=C_{v'}.\]  Then  by   Lemma  \ref{technical},  $\Gamma$  is
arithmetic.

Note also that if $w=A^3v+Av-v$, then 
\[w.w=(A^3v)^2+(Av)^2+v^2+2(A^3v.Av)-2(A^3.v)-2(Av.v)= \] 
\[=2+2+2+2.0-2.2-2.1=0\] and hence $w$ and $\e$ are orthogonal and are
both isotropic: $\Q-rank(V)=2$.
\end{example}

\begin{example}             $f=(x-1)(x^2+1)(x^2+x+1)             \quad
g=(x+1)(\frac{x^5-1}{x-1})$.  \\

The group  $G$  has $\R$  rank two  and
$\Q$-rank one. The group $\Gamma$ is arithmetic. \end{example}

\begin{example} $f=x^5-1, \quad g=(x+1)(x^2-x+1)^2$. \\

The group $G$ has
$\Q$-rank two and $\Gamma $ is arithmetic. \\

We    find     that    $v.Av=-1,v.A^2v=1,v.A^3v=1,v.A^4v=-1$.     Hence
$\e=A^2v+A^3v-v$ is isotropic and its orthogonal complement is generated
by  $\e, v,A^2v,A^4v-Av$. Since  $A^2v-A^4v+Av\in \e^{\perp}$,  and is
isotropic,    it     follows    that    $\Q-rank     (V)=2$.     Since
$^{C_{A^4v}}(C_{Av})=C_{Av-A^4v}$, it follows that the reflections
about these  basis elements $v,A^2v,A^2v-A^4v$  of $\e^{\perp}/\Q\e$ all
lie in  $\Gamma$ and  fix $\e$. Hence  (by arguments similar  to Lemma
\ref{technical}), we  get one non-trivial  element $u$ of  the integral
unipotent  radical $U(\Z)$ of  the parabolic  $P(\e)$ fixing  the line
through   $\e$  in   $\Gamma$.  To   be  specific,   the   element  is
$u=C_vC_{A^3v}C_vC_{A^2v}$, viewed as an element of $\Gamma (\e)$. \\

The conjugates of $u$ by  the reflections  in $\Gamma  (\e)$, generates a
finite  index subgroup  of $U(\Z)$  in $\Gamma  (\e)$.   Therefore, by
Theorem \ref{bamise}, $\Gamma$ is arithmetic.
\end{example}

\begin{example} $f=x^5-1,\quad g=(x+1)^3(x^2-x+1) $.  \\

The  group has  $\Q$-rank two  and $\Gamma  $ is  arithmetic.  We have
$Av.v=2,A^2v.v=1,A^3v.v=1,A^4v.v=2$.    Take   $\e=Av-1$.   Then   $\e
^{\perp}/\Q \e$ is the span of $v,A^3v,A^2v+A^4v$. It is also the span
of $v,A^3v,v'$ where $v'=A^2v+A^4v-2v$. Then
\[^{C_{A^2v}C_v}(C_{A^4v})=                   ^{C_{A^2v}}(C_{A^4v-2v})=
C_{A^4v-2v+A^2v}=C_{v'},\]  and lies  in $\Gamma  (\e)$;  the elements
$C_v,C_{A^3v}$ also do.  Hence, by  Lemma \ref{technical}, the group is
arithmetic. \\

The  element $\e'=  A^2v+A^4v-2v-A^3v$  lies in  $\e  ^{\perp}$ and  is
isotropic; hence $\Q-rank (G)=2$. \end{example}

\begin{example} $f= (x-1)(x^2+x+1)^2, \quad g= (x+1)(x^2-x+1)^2 $

The group has $\Q$-rank two and $\Gamma$ is arithmetic. \\

We    have    $Av.v=-2,A^2v.v=2,A^3v.v=2,    A^4v.v=-6$.   Take    $\e
=A^2v-v$. Then  $\e ^{\perp}$ is the span  of $v,Av,v'=A^4v+2A^3v$. Now
the reflections $C_v$ and  $C_{Av}$ lie in $\Gamma(\e)$; the following
computation
\[^{C_{A^3v}}(C_{A^4v})=C_{A^4v+2A^3v}=C_{v'}  \] shows  that  so does
$C_{v'}$. Hence by Lemma \ref{technical}, $\Gamma$ is arithmetic.
\end{example}

\begin{example} $ f=(x-1)(x^2+x+1)^2, \quad g=(x+1)(x^4-x^2+1)$. \\

The group has $\Q$-rank two and $\Gamma$ is arithmetic. \\

We  have $Av.v=0,  A^2v.v=2,A^3v.v=-2,A^4v.v=2$.  Take $\e  =A^2v-v$;
then $\e ^{\perp}$ is the span of $\e, v, Av, A^4v$ and hence by Lemma
\ref{technical}, the  group $\Gamma$ is arithmetic.  Since $A^4v-v$ is
perpendicular to $\e$ and is isotropic, the $\Q$-rank is two.

\end{example}

\begin{example} $f=x^5-1, \quad g= (x+1)(x^4-x^2+1) $. \\

The group $G$ has $\Q$-rank two and $\Gamma$ is arithmetic. \\

We  have  $Av.v=1,A^2v.v=-1,A^3v.v=-1,  A^4v.v=1$.   It  follows  that
$\e=A^2v-Av+v$  is  isotropic and  $\e  ^{\perp}$  is  spanned by  the
vectors $\e,v, Av, A^4v-A^3v$.  Since $\e'= A^2(\e)$ is also isotropic
and is  $(A^4v-A^3v)+A^2v$ it  follows that $\Q$-rank  of $G$  is two.
Moreover,  $A^4v-A^3v=  ^{C_{A^3v}}(  C_{A^4v})$  and hence  by  Lemma
\ref{technical}  (to check  that Lemma  \ref{technical}  applies, note
that    $\e=   A^2v-Av-v$    is   of    the   form    $g(v)-v$   where
$g=C_{Av}A^2=AC_vA$), $\Gamma$ is arithmetic.
\end{example}

\begin{example} $f=(x-1)(x^2+1)^2, \quad g=(x+1)(x^4-x^2+1)$. \\

$\Gamma$ is arithmetic and $\Q$-rank of $G$ is two. \\

A computation shows  that $Av.v=2,A^2v.v=-1, A^3v.v=-4,A^4v.v=2$. Take
$\e=A^4v -v$. Then
\[\e ^{\perp}/\Q  \e=<v,A^2v,v'= Av+A^3v=C_{A^3v}(Av)>. \]  The formula
for  $v'$   shows  from   Lemma  \ref{technical},  that   $\Gamma$  is
arithmetic. Moreover, if $\e'= v+A^2v-Av-A^3v=v+A^2v-v'$ then $\e'$ is
isotropic and orthogonal to $\e$; hence $\Q -rank (G)=2$.

\end{example}

\newpage
\noindent {\bf  Acknowledgements:} \\

I thank  Peter Sarnak for some very  helpful communications concerning
hypergeometric groups. I also thank Madhav Nori for suggesting 
(in a very different context) that Proposition \ref{bootstrap} should 
be true. \\

The  support of the  JC Bose  fellowship for  the period  2013-2018 is
gratefully acknowledged. \\

Most of the  computations were done while the author  was a visitor at
the ``Collaborative Research Centre, Spectral Structures and Topological
Methods'',  Department of  Mathematics, Bielefeld,  Germany, and TIFR Centre for Applicable Mathematics, Bangalore.   I thank
Herbert Abels and Mythily Ramaswamy for invitation to visit and the Bielefeld University and TIFR CAM, for hospitality. \\

\end{document}